\documentclass{amsart}

\usepackage{url}

\newcommand{\gambar}{\bar{\gamma}}
\newcommand{\cansec}{\vartheta}
\newcommand{\atp}{f_{2g+1}}

\newcommand{\Q}{\mathbb{Q}}

\newcommand{\Z}{\mathbb{Z}}
\newcommand{\PP}{\mathbb{P}}
\newcommand{\Zip}{\Z[1/p]}
\DeclareMathOperator{\Res}{Res}
\newcommand{\Qp}{\Q_p}
\newcommand{\Zp}{\Z_p}
\newcommand{\F}{\mathbb{F}}
\newcommand{\Cp}{\mathbb{C}_p}
\newcommand{\Qpb}{\bar{\Q}_p}

\newcommand{\dr}{\textup{dR}}
\newcommand{\hdr}{H_{\dr}}

\newcommand{\OO}{\mathcal{O}}
\newcommand{\tang}{\mathcal{T}}

\newcommand{\LL}{\mathcal{L}}

\newcommand{\loti}{\log_{\tang}}
\newcommand{\hoti}{h_{\tang}}

\newcommand{\XX}{\mathcal{X}}
\newcommand{\UU}{\mathcal{U}}
\newcommand{\KK}{\mathcal{K}}
\newcommand{\bom}{\bar{\omega}}
\newcommand{\Bom}{\pmb{\omega}}

\newcommand{\Div}{\operatorname{Div}}
\newcommand{\Spec}{\operatorname{Spec}}
\newcommand{\supp}{\operatorname{supp}}
\renewcommand{\div}{\operatorname{div}}
\newcommand{\tr}{\operatorname{tr}}
\newcommand{\hloc}{h_{\textup{loc}}}
\newcommand{\hloco}[1]{\hloc(#1)_\omega}

\newcommand{\frev}{f^{\textup{rev}}}

\theoremstyle{plain}

\newtheorem{theorem}{Theorem}[section]

\newtheorem{proposition}[theorem]{Proposition}
 
\newtheorem{lemma}[theorem]{Lemma}
\newtheorem{corollary}[theorem]{Corollary}
\theoremstyle{definition}

\newtheorem{remark}[theorem]{Remark}

\numberwithin{equation}{section}

\begin{document}
\dedicatory{Dedicated to the memory of Robert F. Coleman (1954-2014)}

\title[Quadratic Chabauty]{Quadratic Chabauty: $p$-adic heights and integral points on hyperelliptic curves}

\author{Jennifer S. Balakrishnan}
\address{Department of Mathematics, Harvard University, 1 Oxford Street, Cambridge, MA 02138, U.S.A. and Mathematical Institute, University of Oxford, Andrew Wiles Building, Radcliffe Observatory Quarter, Woodstock Road, Oxford, OX2 6GG, United Kingdom}\email{balakrishnan@maths.ox.ac.uk}

\author{Amnon Besser}
\address{Mathematical Institute, University of Oxford, Andrew Wiles Building, Radcliffe Observatory Quarter, Woodstock Road, Oxford, OX2 6GG, United Kingdom and Department of Mathematics, Ben-Gurion University of the Negev, P.O.B. 653,Be'er-Sheva 84105 Israel}\email{bessera@math.bgu.ac.il}

\author{J. Steffen M\"{u}ller}
\address{Fachbereich Mathematik, Universit\"{a}t Hamburg, Bundesstrasse 55,
20146 Hamburg, Germany and Institut f\"ur Mathematik, Carl von Ossietzky
Universit\"{a}t Oldenburg, 26111 Oldenburg, Germany}\email{jansteffenmueller@gmail.com}

\begin{abstract}
  We give a formula
  for the component at $p$ of the $p$-adic height pairing of a divisor
  of degree $0$ on a hyperelliptic curve. We use this to give a
  Chabauty-like method for finding $p$-adic approximations to $p$-integral points on such curves
  when the
  Mordell-Weil rank of the Jacobian equals the genus. In this case we
  get an explicit bound for the number of such $p$-integral points, and we
  are able to use the method in explicit computation. An important
  aspect of the method is that it only requires a basis of the
  Mordell-Weil group tensored with $\Q$.
\end{abstract}

\maketitle

\section{Introduction}
\label{sec:intro}

Chabauty's method~\cite{Chab41}, later made effective by
Coleman~\cite{Col85a}, is a fantastic tool for bounding the number of
rational points of a curve $X$ over a number field $F$ and for finding
$p$-adic approximations to these points, when the genus $g$ is larger
than the Mordell-Weil rank of the Jacobian $J$ of $X$ over $F$. This method
has been put to effective use in many instances over the last thirty
years; for examples illustrating this technique, see~\cite{mccallum-poonen:method} or \cite{flynn:flexible}.
Chabauty wrote down $p$-adic functions that vanish on the set
of $F$-rational points $X(F)$ and Coleman identified these functions
as $p$-adic Coleman integrals of holomorphic forms~\cite{Col85}.

The fascinating recent work of Kim~\cite{Kim05,Kim09,Kim10a,Kim-Coa10,BCKW12}
on a \emph{non-abelian} Chabauty method
gives hope that the restriction on the rank of $J$ may be removed by
using more general iterated Coleman integrals. When Kim's method
applies and a Coleman function vanishing on $X(F)$ is found, it can
sometimes be computed explicitly by recent progress on the
computation of such
functions~\cite{BBK09,balakrishnan:iterated,Bes-Bal10}.

In a recent example of Kim's method~\cite{Kim10}, as corrected
in~\cite{BKK11}, an explicit Coleman function was given in the case of an
elliptic curve $E/\Q$ with Mordell-Weil rank $1$ over $\Q$ (satisfying some
auxiliary conditions) and shown to vanish on all integral points. This
function was furthermore explicitly computed in~\cite{BKK11} and numerically exhibited 
to vanish on integral points.

In~\cite{Bes12}, the first two named authors gave an entirely new proof
of Kim's result,
removing, on the way, some of the auxiliary assumptions by showing that
the function identified by Kim was essentially the component at $p$ of
the $p$-adic height and relying on the quadraticity of this height as
a function on $J$ and on the description of the other components as
intersection multiplicities.

The goal of the present work is to extend the methods of~\cite{Bes12}
to hyperelliptic curves. Suppose that $X$ is a hyperelliptic curve
over $\Q$ given by the affine
equation 
\begin{equation}\label{eq:Weierstrass}
  y^2=f(x)\;,
\end{equation}
with $f$ a polynomial of degree $2g+1$ over
$\Z$ which does not reduce to a square modulo any prime number.
Let $p$ be a prime, let $X_p=X\otimes \Q_p$, and let $J$ be the Jacobian of $X$.
We can prove
\begin{theorem}\label{introthm}
  If the Mordell-Weil rank of $J$ over $\Q$ is exactly $g$, then
  there exists a Coleman
  function $\rho$ on $X_p$ and a finite set of values $T$ such that
  $\rho(\UU(\Zip))\subset T$, where $\UU(\Zip)$ is the set of
  $p$-integral solutions to~\eqref{eq:Weierstrass}.  If $X$ has good reduction
  at $p$, then $T$ is effectively computable 
  and $\rho$ is effectively computable from a basis for $J(\Q)
  \otimes \Q$.
\end{theorem}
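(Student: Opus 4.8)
The plan is to reduce Theorem~\ref{introthm} to the formula for the component at $p$ of the $p$-adic height pairing, together with classical Chabauty--Coleman theory (for the logarithm) and the description of the remaining components of the height as intersection multiplicities. Fix an id\`ele class character ramified only at $p$ and a splitting of the Hodge filtration, giving a symmetric bilinear global $p$-adic height pairing $h = \sum_v h_v$ on $J(\Q)\otimes\Q$ with values in $\Q_p$. Since $\deg f = 2g+1$ there is a unique, rational, point $\infty$ at infinity, so $(P)-(\infty)$ has a well-defined class in $J(\Q)$ for every $P\in X(\Q)\supseteq \UU(\Zip)$. Fix a basis $D_1,\dots,D_g$ of $J(\Q)\otimes\Q$ by divisors of degree $0$ with pairwise disjoint support, let $H = (h(D_i,D_j))$, and write $[(P)-(\infty)] = \sum_i c_i(P)\,[D_i]$ with $c_i(P)\in\Q$.

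First I would recover the $c_i(P)$ from single Coleman integrals. The $p$-adic logarithm $\log_J\colon J(\Q_p)\to \mathrm{Lie}(J)\otimes\Q_p$ has coordinates $\int_\infty^{(\cdot)}\omega_1,\dots,\int_\infty^{(\cdot)}\omega_g$ in a basis $\omega_k$ of holomorphic differentials; applying it to $[(P)-(\infty)] = \sum_i c_i(P)[D_i]$ gives the linear system $\int_\infty^{P}\omega_k = \sum_i c_i(P)\,\big(\int_{D_i}\omega_k\big)$ for $k=1,\dots,g$. Under the hypothesis that $A := (\int_{D_i}\omega_k)$ is invertible --- equivalently, that $\log_J D_1,\dots,\log_J D_g$ are linearly independent over $\Q_p$, which is expected and can be checked in examples --- one solves $c_i(P) = \Lambda_i(P) := \sum_k (A^{-1})_{ik}\int_\infty^{P}\omega_k$. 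Thus each $c_i$ is a $\Q_p$-linear combination of single Coleman integrals of holomorphic forms, hence a Coleman function on $X_p$, with \emph{no} correction terms at primes of bad reduction.

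Next I would compute the self-pairing $h\big((P)-(\infty),(P)-(\infty)\big)$ in two ways. By bilinearity it equals $\sum_{i,j} c_i(P)c_j(P)\,h(D_i,D_j) = \sum_{i,j} H_{ij}\,\Lambda_i(P)\Lambda_j(P)$, a Coleman function on $X_p$ quadratic in the holomorphic integrals. Decomposing $h = h_p + \sum_{v\neq p}h_v$, the term at $p$ is $h_p\big((P)-(\infty),(P)-(\infty)\big)$, which by our formula for the local height at $p$ is an explicit iterated Coleman integral, hence a Coleman function $\tau(P)$ on $X_p$; the terms at $v\neq p$ are normalized intersection multiplicities on a regular model of $X$ over $\Z_{(v)}$. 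Therefore, setting $\rho := \tau - \sum_{i,j} H_{ij}\Lambda_i\Lambda_j$ --- a Coleman function on $X_p$ --- one obtains $\rho(P) = -\sum_{v\neq p} h_v\big((P)-(\infty),(P)-(\infty)\big)$ for all $P\in\UU(\Zip)$.

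It remains to control the right-hand side. For $v\neq p$ a $\Zip$-point $P$ is $v$-integral, so the Zariski closures of $(P)$ and $(\infty)$ are sections of the regular model, and the intersection numbers entering $h_v$ are bounded and take only finitely many values, depending only on the intersection graph of the special fibre; these graphs are trivial for all but the finitely many $v$ dividing the discriminant of $f$. Hence $T := \{-\sum_{v\neq p} h_v((P)-(\infty),(P)-(\infty)) : P\in\UU(\Zip)\}$ is finite and, from the reduction types at the bad primes $\neq p$, effectively computable. Finally $\rho$ is assembled from the Coleman integrals $\int_{D_i}\omega_k$, the entries $H_{ij} = h(D_i,D_j)$, and our explicit formula for $\tau$; when $X$ has good reduction at $p$ all of these are computed from the $D_i$ alone --- by Coleman integration on the good model, and by intersection theory at the bad primes $\neq p$ for the global heights $h(D_i,D_j)$ --- and none of it requires generators of $J(\Q)$ over $\Z$, only a $\Q$-basis, since everything in sight depends only on the $\Q$-span. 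The deep input, and the main obstacle, is the formula for $h_p$ on the diagonal: the differential of the third kind with residue divisor $(P)-(\infty)$ has poles at $P$ and at $\infty$, precisely the endpoints of the integral defining $\tau$, so $\tau$ must be produced as a regularized iterated integral rather than a naive one. Once that formula is in hand, the only remaining assumption is the linear independence of $\log_J D_1,\dots,\log_J D_g$, and the rest of the argument is bookkeeping with intersection numbers.
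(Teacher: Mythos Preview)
Your approach is essentially identical to the paper's: define $\tau(P)=h_p((P)-(\infty),(P)-(\infty))$ via the tangent-vector-normalized local height, express the global height as a quadratic form in the abelian logarithms $\int_\infty^P\omega_k$, and set $\rho$ to be the difference, so that $\rho(P)=-\sum_{q\ne p}h_q((P)-(\infty),(P)-(\infty))$ takes finitely many values governed by intersection data on regular models at the bad primes. Your parametrization via $c_i(P)=\Lambda_i(P)$ and the Gram matrix $H$ is just a change of basis from the paper's $\rho=\tau-\sum_{i\le j}\alpha_{ij}f_if_j$.

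The one point to tighten: you treat the invertibility of $A=(\int_{D_i}\omega_k)$ as an extra hypothesis ``which is expected and can be checked in examples,'' but the theorem as stated has no such hypothesis. The paper disposes of the singular case by a dichotomy (Theorem~\ref{cutting-function}): if $A$ is singular then some nonzero $\Q_p$-linear combination of the $f_k=\int_\infty^{(\cdot)}\omega_k$ vanishes on the image of $J(\Q)$ in $J(\Q_p)$, hence on $\UU(\Zip)$, and one simply takes $\rho$ to be that linear combination and $T=\{0\}$. With that easy case added, your argument is complete and matches the paper.
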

A more precise version of this result will be given in
Theorem~\ref{cutting-function} in Section~\ref{sec:integral}. We
use this to give an effective bound on the number of integral points
in Theorem~\ref{boundthm}.

We expect that the result above will lead to a practical method
for recovering all integral points on hyperelliptic curves satisfying the
assumptions of the theorem (see Remark~\ref{practical}). In Example~\ref{ex:g2}, we show
how to carry this out in practice and find all integral points with $x$-coordinate having
absolute value less than a prescribed bound.

A different algorithm for the computation of integral points on
hyperelliptic curves is given in~\cite{BMSST08}.
Their algorithm combines linear forms in logarithms with a modified version of
the Mordell-Weil sieve and does not assume that the rank of $J(\Q)$ is equal to
the genus.
However, in contrast to our method, it does require generators of the free part of $J(\Q)$ which limits its
applicability, since currently there is no practical algorithm for the
saturation of a finite index subgroup of $J(\Q)$ unless $g = 2$
\cite{stoll:height_constantII}; the case $g=3$ is currently being worked out \cite{stoll:g3}.
For a recent approach to this problem using arithmetic intersection theory, see
\cite{holmes}.

The paper~\cite{BMSST08} contains (see page~3) a brief discussion of other possible attacks on the problem using $S$-units and the Baker or Skolem methods (see~\cite{Bil-Han98} and~\cite{Sma98}) and why these often fail in practice.

Our proof relies on the description of the $p$-adic height pairing
given by Coleman and Gross~\cite{Col-Gro89}, the quadraticity of the pairing, and
$p$-adic Arakelov 
theory~\cite{Bes00}. For any $X$ and $F$, the height pairing
\begin{equation*}
  h: J(F) \times J(F) \to \Qp\;,
\end{equation*}
depending on some auxiliary data, is defined initially for divisors $D_1$
and $D_2$ of degree zero 
with disjoint support as a sum of local height pairings
\begin{equation*}
  h(D_1,D_2) = \sum_v h_v(D_1,D_2)
\end{equation*}
over the finite places $v$ of $F$. As in~\cite{Bes12}, a key point is
to remove the disjoint support restriction by extending the local
height pairings relative to a choice of tangent vectors, as suggested
in the classical case in~\cite{Gross86}. For our hyperelliptic curve,
we make a certain consistent choice of such tangent vectors. The
resulting local height pairing gives rise to a function $\tau(x)=
h_p(x-\infty,x-\infty)$ which is computed explicitly as a Coleman
integral in Theorem~\ref{tauthm} and is the main summand for the
function $\rho$ in Theorem~\ref{introthm}. The finite set of values $T$ results from the sum of
the height pairings $h_q$ for $q\ne p$ and is discussed in detail in
Proposition~\ref{locaway}.

Let us explain the reason for the restriction that $X$ is assumed to have good reduction at
$p$. As our work ultimately relies on $p$-adic Arakelov
theory~\cite{Bes00}, which
has been developed using Vologodsky's integration
theory~\cite{Vol01}, it is insensitive to the type of reduction
at $p$. However, Vologodsky's integration has been, so far, difficult to
compute, and the computation of Coleman integrals has been, thus far, done in
the good reduction case. The situation will likely change
with~\cite{Bes-Zer13}, which will reduce Vologodsky integration on
semi-stable curves to Coleman integration.

We briefly discuss possible applications (see also
Remark~\ref{higherfields}). The function $\rho$ is
given, on the residue disk of $p$-adic points reducing to a given
point, by a convergent power series. It is therefore possible, in
reasonable time, to compute all solutions of $\rho(x)\in T$ to high
$p$-adic precision. To find the integral points, one needs to (provably)
identify which solutions of $\rho(x)\in T$ do come from integral
points and recover those points. This can be done by combining the
techniques of this paper with the Mordell-Weil sieve. We will report
on this in future work.

It is furthermore easy, given some initial coefficients in the power
series expansions making up $\rho$, to deduce a bound on the total
number of possible solutions to $\rho(x)\in T$, hence on the number of
$p$-integral solutions to~\eqref{eq:Weierstrass}. 
A fairly crude version of this is given in Theorem~\ref{boundthm}. 
The bound depends on some computations, including those of Coleman integrals, on the curve. 
One might hope for a
bound which, like~\cite[(ii) on p.~765]{Col85a}, will depend only on some simple numerical
data of the curve, such as genus and types of bad reduction, but this
is unfortunately not obvious at this point in time.

We conclude by giving a number of numerical examples to illustrate our techniques.

\section*{Acknowledgements}
We would like to thank Michael Stoll for computing all integral points on the
curve $X$ in Example~\ref{ex:g2}.  Moreover, we would like to thank William Stein and
NSF grant DMS-0821725 for access to
\texttt{mod.math.washington.edu}. The second author would like to thank the School of Mathematical
Sciences at Arizona State University, where a significant part of the
research was carried out. The first author was supported by
NSF grant DMS-1103831. The second author's stay at Oxford was funded by ERC grant 204083 and by EPSRC grant I020519/1. The third author was supported by DFG grant
KU~2359/2-1.
\section{The local height pairing as a Coleman function}
\label{sec:local}

The goal of this section is to prove Theorem~\ref{tauthm} describing
the function $\tau$, essentially the component above $p$ for the
height pairing on a hyperelliptic curve, explicitly as a Coleman function.
In this section we let $p$ be a prime number and consider a smooth and 
proper curve $C$ of genus $g$ (we will later assume $C$ is hyperelliptic) defined over the
algebraic closure $\Qpb$ of the field of $p$-adic numbers. We fix a
decomposition
\begin{equation}\label{eq:decompi}
  \hdr^1(C) = W\oplus \Omega^1(C)\;,
\end{equation}
such that $W$ is isotropic with respect to the cup product, and a branch of
the $p$-adic logarithm which we denote by $\log$ (such a branch is
uniquely determined by declaring the value of $\log(p)$). Given this data, the
second named author defined in~\cite{Bes00} the canonical (up to an
additive constant) $p$-adic
Green function $G:C\times C- \Delta \to \Qpb$, where $\Delta \subset
C\times C$ is the diagonal.

The Green function $G$
computes the local component of the height pairing at a place $v$
above $p$ as follows: Suppose $X$ is a curve over a number field $F$ and
$C$ is an extension of scalars of $X$ via $F\to F_v\subset
\Qpb$. Suppose further that $D_1$ and $D_2$ are two degree $0$ divisors with
disjoint support on $X$ and denote in the same way their preimages in
$C$, $D_1=\sum n_i P_i$, $D_2= \sum m_j Q_j$. Then, the local
component $h_v(D_1,D_2)$ of the $p$-adic height pairing is $\tr_v
\sum_{ij} n_i m_j G(P_i,Q_j)$, where the sum is in $F_v$ and the trace
map $F_v\to \Qp$ is some $\Qp$-linear map obtained from the data for
the height pairing. When $F=\Q$, it is simply the identity map, so we
may ignore it for our applications.

In almost perfect analogy with classical Arakelov theory, the Green
function $G$ corresponds to a ``metric'' on the line bundle $\OO(\Delta)$ on
$C\times C$. In $p$-adic Arakelov
theory the right analogue of the notion of a metric
corresponds more closely to the logarithm of the metric. It leads to
the notion of a \emph{log-function}, a (Coleman) function on the total space of
the bundle minus the $0$-section, which is fiber-by-fiber the
$p$-adic logarithm up to an additive constant (this is independent of
the trivialization of the fiber). There is a notion of curvature for
log-functions. For a line bundle on a variety $U/\Qpb$ it takes the form of
an element $\sum_i [\eta_i]\otimes \omega_i \in \hdr^1(U)\otimes
\Omega^1(U)$,  and when $U$ is affine it is related with the
log-function of a section $s$ by the formula~\cite[Proposition~2.7]{Bes00}
\begin{equation}\label{dlogsec}
  d \log(s) = \sum_i \omega_i \int \eta_i + \theta
\end{equation}
up to an unknown holomorphic form $\theta$, where the $\eta_i$ are
forms of the second kind representing the cohomology classes $[\eta_i]$. For this theory we refer
to~\cite[Section~4]{Bes00}.

Following~\cite{Gross86} we extended in~\cite{Bes12} the local
$p$-adic height pairing to the case of divisors whose support is not
necessarily disjoint. At each point in the common support one uses a
tangent vector to normalize the height pairing. It is done in such a
way that choosing the tangent vectors on the global curve makes
the sum of the local height pairings independent of the choice. This
normalization is achieved by replacing the undefined term $G(x,x)$ at
a common point of support $x$ by the term $G_x(x,\cansec)$, depending
on a tangent vector $\cansec$ at $x$, which is the ``constant term''
of the function $G_x(y):=G(x,y)$ at $y=x$ with respect
to a local parameter $z$ normalized with respect to $\cansec$ in the sense
that $\partial_\cansec (z) = 1$. The constant term is defined in
general in~\cite[Definition~3.1]{Bes-Fur03}. In our situation, as $G$ has
logarithmic singularities along the diagonal, we more simply have,
by~\cite[Lemmas 3.2 and 3.3]{Bes12}
the local expansion around $x$, $G_x = \log(z)+c_0+c_1 z+\cdots $ and
$G_x(x,\cansec)=c_0$.

To analyze the resulting local height at a place above $p$ we pick a
point $x_0$ on $C$ and a tangent vector $\cansec_0$ to $C$ at
$x_0$. We can then define the function $\hoti$ on the tangent bundle $\tang$ to $C$ 
outside of the point $x_0$ by (see~\cite{Bes12})
\begin{equation*}
  \hoti(x,\cansec) = G_x(x,\cansec)-2G_{x_0}(x) + G_{x_0}(x_0,\cansec_0)\;.
\end{equation*}
Clearly, this function is related to the local height pairing as follows: If
all data are defined over a finite extension $K$ of $\Qp$, then the
value $\hoti(x,\cansec)$ lies in $K$ and the local height, evaluated
on the divisor $(x)-(x_0)$ with choices of tangent vectors $\cansec$ and
$\cansec_0$, is exactly  $\tr_v \hoti(x,\cansec)$.
 
The key observation of~\cite{Bes12} was that the function $\hoti$ is a
log-function on $\tang|_{C-x_0}$. Furthermore it is related to the log
function on  $\OO(\Delta)|_{\Delta}$, obtained as the restriction to
$\Delta$ of the canonical log-function discussed above, via the canonical
identification $\tang \cong \OO(\Delta)|_{\Delta}$.  The curvature of this
canonical log-function was computed in~\cite[Definition 5.1, Theorem
5.10]{Bes00}. Using this, we obtained in~\cite{Bes12} the following
result.
\begin{proposition}[{\cite[Proposition 3.10]{Bes12}}]\label{curveform}
  Let $\{\omega_0,\ldots, \omega_{g-1}\}$ be a basis for the holomorphic
  forms on $C$ and let $\{\bom_0,\ldots \bom_{g-1}\}\subset W $ be the
  unique dual basis with respect to the cup product. Then, the
  curvature of $\hoti$ is $ -2 \sum_{i=0}^{g-1} \bom_i
  \otimes \omega_i$.
\end{proposition}

Suppose now that $C$ is a hyperelliptic curve defined by the equation
$y^2=f(x)$ with $f$ a polynomial of degree $2g+1$. We have a basis
$\{\omega_i\}_{i=0}^{2g-1}$ of the de
Rham cohomology $\hdr^1(C/\Qpb)$ given by the forms of the second kind
$\omega_i = x^i dx/2y$. The form $\omega_i$ has order $2g-2 -2i$ at
$\infty$, and it has no pole away from $\infty$. In particular, the
forms $\omega_i$ for $i\le g-1$ are
holomorphic. We let $\omega_i^\prime= \omega_{2g-1-i}$ for $0\le i \le
g-1$, so that $\omega_i^\prime$ has a pole of order $2(g-i)$ at
$\infty$. As in the introduction, we fix $\omega=\omega_0$. This
form vanishes to order $2g-2$ at infinity and has no
other zeros or poles. It determines, by duality, a section $\cansec$ of $\tang$
with a pole of order $2g-2$ at infinity and no other zeros or
poles. On the other hand, the form $\omega_{g-1}$ has no zero or pole
at infinity and we fix $\cansec_0$ to be the dual of its value there. We pick
a parameter $z= -y/(x^{g+1}\atp)$ at infinity, where $\atp$ is the leading
coefficient of $f$.
Let $\frev(x)$ denote the polynomial $x^{2g+1}f(1/x) \in \Z[x]$.
Using~\cite[(10)]{Bes-Bal10} as well as the description at infinity of
the $\omega_i$ following (11) there, and
noting that the parameter $t$ there is $-z\atp$, we find a local
equation at $\infty$ with respect to the parameter $s=x^{-1}$:
\begin{equation*}
  z^2 \atp = \atp^{-1} s \frev(s) = s + O(s^2)\;.
\end{equation*}
Differentiating, we find that 
\begin{equation*}
  -\frac{ds}{2t} = \frac{ds}{2\atp z} = (1+O(z)) dz
\end{equation*}
and
\begin{equation*}
  \omega_{i} = - s^{g-1-i} \frac{ds}{2t} = (\atp z^2)^{g-1-i} (1+O(z)) dz\;.
\end{equation*}
In particular $\omega_{g-1}=  (1+O(z)) dz$,
and one easily finds that
$z$ is normalized with respect to $\cansec_0$. Let us also record then the
local expansion
\begin{equation}\label{xintermss}
  x= s^{-1} = \atp^{-1} z^{-2} (1+\cdots )\;.
\end{equation}
As the $\omega_i$ only have poles at $\infty$, the formula for the cup
product in terms of integrals and residues~\cite[Corollary 5.1]{Col98} gives
\begin{equation*}
 [\omega_j^\prime] \cup [\omega_i] = \Res_\infty\left(\omega_i \int \omega_j^\prime\right)\;.
\end{equation*}
Clearly, when $g-1 \ge j>i$ the form $\omega_i \int \omega_j^\prime$
is holomorphic at $\infty$, while
\begin{align*}
  \omega_i \int \omega_i^\prime &= \atp^{g-1-i} z^{2(g-1-i)} (1+O(z))
  \frac{\atp^{i-g}}{2(i-g)+1} z^{2(i-g)+1} (1+O(z)) dz\\ &= \frac{dz}{z
    \atp (2(i-g)+1)}  (1+O(z))\;.
\end{align*}
We find
\begin{equation}
  \label{eq:cup}
  [\omega_i^\prime] \cup [\omega_i] = \frac{1}{\atp(2i+1-2g)}\;,\;
  [\omega_j^\prime] \cup [\omega_i] = 0 \text{ if } g-1 \ge j>i\;.
\end{equation}

Let  $\{\bom_i\}_{i=0}^{g-1}$ be the basis for $W$ which is dual to the basis
$\{\omega_i\}_{i=0}^{g-1}$ for the holomorphic forms via the cup
product. From~\eqref{eq:cup} it follows that
\begin{equation}
  \bom_i =  \atp (2i+1-2g)\omega_i^\prime+ \text{ combination of }
  \omega_j^\prime \text{ for } j>i\; +  \text{ a holomorphic
    form.}\label{eq:bom}
\end{equation}

Recall~\cite[Definition~7.7]{Bes-deJ02} that the constant term of a
Coleman integral with respect to a parameter $z$ at a point is the
coefficient of $1$ in its expansion which includes both powers of $z$
and non-negative powers of $\log(z)$. We are going to normalize our
integrals in such a way that the constant term with respect to the
parameter $z$ at $\infty$ is $0$. We will write $\int_{\cansec_0}$ (recall
that $z$ is normalized with respect to $\cansec_0$) for this
type of integral We note that this is the integral from the tangential
base point $\cansec_0$ when all integrands have
logarithmic singularities at
$\infty$~\cite[Proposition~2.11]{Bes-Fur03}.

\begin{theorem}\label{tauthm}
  Let $\tau$ be the pullback of $\hoti$ under $\cansec$. Then we have
  \begin{equation}\label{Thm21eq}
    \tau(x) = -2 \int_{\cansec_0}^x \left(\sum_{i=0}^{g-1} \omega_i \int_{\cansec_0}^y
      \bom_i \right) - (g-1) \log(\atp)\;.
  \end{equation}
\end{theorem}
\begin{proof}
By \eqref{dlogsec} and Proposition~\ref{curveform} we have
  \begin{equation*}
    d \tau = -2 \sum_{i=0}^{g-1} \omega_i \int \bom_i+\theta
  \end{equation*}
  where $\theta$ is a holomorphic form on $U$ and the integrals are indefinite.
  There is the indeterminacy of the constant of integration and of the
  form $\theta$. Note that the constant of integration for the $\bom_i$
  simply adds to $\theta$ holomorphic forms on $C$. Thus we have
  \begin{equation*}
    \tau(x) + 2 \int_{\cansec_0}^x \left(\sum_{i=0}^{g-1} \omega_i \int_{\cansec_0}^y
      \bom_i \right) = \int \theta'
  \end{equation*}
  where $\theta'$ is holomorphic on $U$. We would like to show that
  the right hand side is a constant and then compute this constant.
  
  First we argue that the form $\theta'$
  extends to a holomorphic form on $C$. To see this, we observe that
  it follows from~\eqref{eq:bom} that $\omega_i (\int \bom_i )$ has a
  simple pole at
  infinity and no other pole. On the other hand, since $\omega_0$ has no
  poles or zeros outside infinity,  $\cansec(x)$ as a section of $\tang$
  also has no such zeros and poles outside infinity. It follows that
  the form $d\tau$
  also has a simple pole at infinity and no other pole. Indeed,
  by~\cite[Definition~4.1]{Bes00} and the ensuing remarks,  if $s$ is a
  non-vanishing section of $\tang$,
  then locally for the analytic topology, $\tau$ looks like $\loti(s)
  +\log(\cansec/s)$ with $\loti(s) $ analytic. Thus, $\theta'$ has a simple pole at
  infinity, but since it is  meromorphic, the residue theorem implies that it is
  in fact holomorphic on $C$ (one can directly compute the residue at
  infinity and easily discover that it is $0$).
  
  Next we claim that $\theta' = 0$. This is because both $\tau$ and the
  integral on the right hand side of~\eqref{Thm21eq} are symmetric with respect to the hyperelliptic
  involution $w$. For $\tau$ this follows from functoriality of the
  local height
  pairing. Namely, as the involution preserves the complementary space
  $W$, the Green function $G$, which depends only on $W$ and the branch
  of the $p$-adic log, is preserved as well. The tangent vectors used
  for normalizations are multiplied by $-1$ but since $\log(-1)=0$, the
  normalization is maintained as well by~\cite[Proposition 3.4 and
  Definition 2.1 (iii)]{Bes12}.  For the
  integral, we first note that all forms considered are
  anti-symmetric with respect to $w$. Then the integral of $\bom_i$ is
  anti-symmetric up to a constant, and this is $0$ because the constant
  term with respect to a parameter which is itself anti-symmetric is
  $0$. Thus, all terms $\omega_i \int_{\cansec_0}^y \bom_i$ are symmetric, and
  their integral is so as well.
  Consequently, $\theta'$ is a holomorphic symmetric form on $C$, hence
  $0$.
  
  It follows that~\eqref{Thm21eq}
  holds up to a constant. It suffices to show that the constant term of
  $\tau$ at infinity is $-(g-1) \log(\atp) $. With $\tau$ replaced by the
  pullback of $\hoti$ under the dual of $\omega_{g-1}$, the constant
  term is $0$ by~\cite[Proposition~3.10]{Bes12}, since
  $\omega_{g-1}$ is dual to the non-vanishing $\cansec_0$ at infinity. However,
  this pullback differs from $\tau$ by $\log(x^{g-1})= (g-1)
  \log(x)$. Using~\eqref{xintermss} we find  
  \[
  \log(x) = -2\log(z) -\log(\atp) +\;\textrm{a power series vanishing at 0}.
  \] 
  The constant term of $x$ with respect to $z$ is therefore $-\log(\atp)$ and that of $\tau$ is $-(g-1)\log(\atp)$.
\end{proof}

\section{Application to $p$-integral points}
\label{sec:integral}

The results of the previous section may be used, in a similar way to
the results in~\cite{Bes12}, to obtain a $p$-adic characterization of
$p$-integral points on hyperelliptic curves, a kind of ``quadratic
Chabauty.'' Let $f\in\Z[x]$ be a separable polynomial of
degree $2g+1 \ge 3$ such that $f$ does not reduce to a square modulo $q$ for any
prime number $q$.
Let $\UU=\Spec(\Z[x, y] / (y^2 - f(x)))$, so that $\UU(\Zip)$ is exactly the set of
$p$-integral solutions to $y^2=f(x)$. Let $X$ be 
the normalization of the closure of
the generic fiber of $\UU$, and let $J$ be its Jacobian.
\begin{theorem}\label{cutting-function}
  Let $f_i$, for $i=0,\ldots, g-1$, be defined by the formula
  \begin{equation*}
    f_i(z) = \int_\infty^z \omega_i,
  \end{equation*}
  and let $g_{ij} = f_i \cdot f_j$ for $i\le j$. Suppose that the
  Mordell-Weil rank of $J$ is exactly $g$. Then either there exists a
  linear combination of the $f_i$ that vanishes on $\UU(\Zip)$ (in other
  words, Chabauty's method works, at least for the $p$-integral points), or
  there exist
  constants $\alpha_{ij}\in \Q_p$ such that the function
  \begin{equation}\label{rhofunction}
    \rho(z) = \tau(z) - \sum_{i\le j} \alpha_{ij} g_{ij}(z)
  \end{equation}
  satisfies the following:
  \begin{enumerate}
  \item There is an effectively computable constant $s \in \Q_p$ such that we have
      $\rho(x) = s$ for every $x\in \UU(\Zip)$
    which does not intersect any of the singular points in any of the bad
    fibers of $\UU$. If $f$ is monic, then we have $s = 0$. \label{31a}
  \item There exists an effectively computable  finite set of values $T$ containing 
      $\rho(\UU(\Zip))$.
  \end{enumerate}
\end{theorem}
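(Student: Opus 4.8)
The strategy is to recognise $\tau(x)$ as the component at $p$ of the global $p$-adic height pairing of Coleman--Gross applied to the divisor $(x)-(\infty)$, and then to exploit the quadraticity of that pairing on $J(\Q)$ together with the rank hypothesis. For $x\in\UU(\Zip)$ the point $x$ is $\Q$-rational, so $D_x:=(x)-(\infty)$ is a degree-zero divisor with $[D_x]\in J(\Q)$, and the global height $h(D_x,D_x)\in\Qp$ is defined once we extend the local pairings along tangent vectors. Taking the consistent $\Q$-rational choice $t$ (dual to $\omega_0$) at finite points and $t_0$ (dual to $\omega_{g-1}$ at $\infty$) fixed in Section~\ref{sec:local}, the discussion there together with Theorem~\ref{tauthm} identifies $h_p(D_x,D_x)=\hoti(x,t(x))=\tau(x)$. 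Since changing a tangent vector at a $\Q$-rational point by $\lambda\in\Q^\times$ alters $\sum_v h_v$ by a term $m\sum_v\chi_v(\lambda)$, where $m$ is an intersection multiplicity and $\sum_v\chi_v$ is the global character, which is trivial on $\Q^\times$ (as in~\cite{Bes12}), the global height is insensitive to these choices, and we obtain
\[
h(D_x,D_x)=\tau(x)+\sum_{q\neq p}h_q(D_x,D_x).
\]

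Next I would convert $h(D_x,D_x)$ into a quadratic polynomial in the Coleman integrals $f_i$. Extend $h$ $\Q_p$-bilinearly to $V:=J(\Q)\otimes\Q_p$; this is a symmetric bilinear form on a space of dimension $\operatorname{rank}J(\Q)=g$. The logarithm gives a $\Q_p$-linear map $V\to H^0(X_p,\Omega^1)^{*}$ sending $[(z)-(\infty)]$ to the functional $\omega_i\mapsto f_i(z)$, and the target also has dimension $g$. Hence either this map fails to be injective --- equivalently, some nonzero $\sum c_i\omega_i$ pairs trivially with $J(\Q)$, so that $\sum c_if_i$ vanishes on $X(\Q)\supseteq\UU(\Zip)$ and the first alternative of the theorem holds --- or it is an isomorphism. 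In the latter case, transporting $h$ through it yields constants $\alpha_{ij}\in\Q_p$ with
\[
h\big((z)-(\infty),(z)-(\infty)\big)=\sum_{i\le j}\alpha_{ij}\,g_{ij}(z)
\]
for every $\Q$-rational $z$; concretely, if $P_1,\dots,P_g$ is a basis of $J(\Q)\otimes\Q$, the $\alpha_{ij}$ are read off from the matrix of $\log$-coordinates $\big(\langle\log P_k,\omega_i\rangle\big)$ (invertible in exactly this case) and the height matrix $\big(h(P_k,P_l)\big)$, both of which are effectively computable from the basis when $X$ has good reduction at $p$. Combining with the previous paragraph gives, for $x\in\UU(\Zip)$,
\[
\rho(x)=\tau(x)-\sum_{i\le j}\alpha_{ij}g_{ij}(x)=-\sum_{q\neq p}h_q(D_x,D_x).
\]

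It remains to control the away-from-$p$ contribution, and for this I would invoke the intersection-theoretic description of the $h_q$ summarised in Proposition~\ref{locaway}. Only finitely many primes $q$ are bad, and these are effectively determined by $f$; over each such $q$ the value $h_q(D_x,D_x)$ depends only on which components of the special fibre of a regular model the closures of $x$ and of $\infty$ meet, so it takes only finitely many explicitly computable values, and the same is then true of $\sum_{q\neq p}h_q(D_x,D_x)$. If moreover $x$ reduces to a nonsingular point at every bad prime, its closure meets the component carrying the affine chart $\UU$ for every $q$, so the configuration is fixed and $\sum_{q\neq p}h_q(D_x,D_x)$ equals a single value $-s$; when $f$ is monic one checks $s=0$, since then $\infty$ lies on that same component and the correction divisors entering the local heights are orthogonal to it. This gives assertion~(\ref{31a}): $\rho(x)=s$ on the indicated $x$, with $s=0$ for $f$ monic. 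For assertion~(2), take $T$ to be the finite, effectively computable set of negatives of the possible values of $\sum_{q\neq p}h_q(D_x,D_x)$; then $\rho(\UU(\Zip))\subseteq T$.

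The main obstacle is the middle step. One must check carefully that the identification $h_p(D_x,D_x)=\tau(x)$ uses precisely the consistent tangent-vector choices of Section~\ref{sec:local}, and that the global height --- hence the quadratic form it defines --- is independent of those choices; this rests on the functoriality and product-formula properties of the Coleman--Gross pairing, exactly as in~\cite{Bes12}, and it is what forces the rank hypothesis (and the Chabauty dichotomy) to enter exactly when one tries to write $h\big((z)-(\infty),(z)-(\infty)\big)$ as an honest polynomial in the Coleman integrals $g_{ij}$. The remaining ingredients --- Theorem~\ref{tauthm}, the computability of Coleman integrals in the good-reduction case, and the intersection formulas for the $h_q$ --- then enter essentially as black boxes.
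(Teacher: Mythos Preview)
Your proposal is correct and follows essentially the same route as the paper: identify $\tau(x)=h_p((x)-(\infty),(x)-(\infty))$ via the setup of Section~\ref{sec:local}, use the rank-$g$ hypothesis to write the global height as $\sum\alpha_{ij}g_{ij}$ (your injectivity-of-$\log$ dichotomy being equivalent to the paper's linear independence of the $f_i$), and then invoke Proposition~\ref{locaway} to show the away-from-$p$ sum takes finitely many effectively computable values, with the monic case giving $s=0$ since then no $q$ divides $\atp$. The only cosmetic difference is that the paper fixes the idele class character with $\ell_p=\log$ up front rather than arguing independence of tangent-vector choices via the product formula, but the content is the same.
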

\begin{proof}
  We recall that the height pairing depends on the choice of a
  $\Qp$-valued idele class character $\ell=\oplus \ell_q$, a sum over
  all finite primes $q$. Fixing a choice for the $p$-adic logarithm,
  which we simply denote by $\log$, we can choose $\ell$ in such a way
  that $\ell_p= \log$. In this case, the trace map mentioned in the
  introduction is simply the identity, and the local height pairing
  $h_p$ is simply the pairing mentioned there, so that
  $h_p((x)-(\infty),(x)-(\infty))$, normalized as before, is simply
  $\tau(x)$.
  
  If Chabauty's method does not work, the $f_i$, extended linearly,
  induce linearly
  independent $\Qp$-valued functionals on the $g$-dimensional vector
  space $J(\Q)\otimes \Q$. It follows that the $g_{ij}$ are a basis for
  the space of $\Qp$-valued quadratic forms on $J(\Q)\otimes \Q$. The height pairing
  $h$, being another $\Qp$-valued quadratic form on $J(\Q)\otimes \Q$, is thus a
  linear combination of the above basis, $h= \sum \alpha_{ij}
  g_{ij}$. If $x\in X(\Q)$, then, applying the last formula to the divisor $(x)-(\infty)$, 
  Theorem~\ref{tauthm} implies that
  \begin{equation}\label{rhoequation}
    \rho(x) = -\sum_{q\ne p} h_q((x)-(\infty),(x)-(\infty))\;.
  \end{equation}
  Note that the local heights $h_q$ are computed using the localization
  of the same global tangent vectors as before.
  According to Proposition~\ref{locaway} below,  for each prime number $q\ne p$
  there is an effectively computable $p$-adic number $s_q$ such that 
  when $x$ is a $p$-integral point and either the
  reduction at $q$ is good, or, more generally, the reduction is bad but
  $x$ does not reduce to a singular point modulo $q$, we
  have \[ h_q((x)-(\infty),(x)-(\infty)) = s_q.\]
  See~\eqref{aq} for an explicit expression for $s_q$.
   If $q$ does not divide the leading coefficient $\atp$
  of $f$ then $s_q = 0$ by Proposition~\ref{locaway}.

  In general, Proposition~\ref{locaway} 
  implies that there is a proper regular model $\XX$ of $X \otimes \Q_q$ over $\Z_q$
  such that 
  if $x$ is $p$-integral, then $ h_q((x)-(\infty),(x)-(\infty))$ depends solely (and explicitly, see
  Section~\ref{explicit-intersection}) on the component of the special fiber
  $\XX_q$ that the section in $\XX(\Z_q)$ corresponding to $x$ intersects. Thus, for any $p$-integral point $x$, the right hand side
  of \eqref{rhoequation} can only take a finite number of explicitly
  computable values, completing the proof.
\end{proof}
\begin{remark}\label{comprem}
  From the proof it is clear that the size of the set $T$ is bounded by
  $1+\prod_q (m_q-1)$, where $m_q$ is the number of multiplicity one
  components of the special fiber $\XX_q$ of the proper regular model $\XX$ of
  $X$ constructed below, since these are the only components
  through which the section corresponding to $x$ can pass.
\end{remark}
We construct a normal model $\XX'$ of $X \otimes \Q_q$ over $\Spec(\Z_q)$ as follows: 
If $F(X, Z)$ is the degree $2g+2$-homogenization of $f$, then the equation
\[
Y^2 = F(X,Z)
\]
gives a smooth plane projective model of $C$ in projective space
$\PP^2_{\Q_q}(1,\,g+1,\,1)$ over
$\Q_q$ with respective weights $1,\,g+1$ and $1$ assigned to the variables
$X,\,Y,\,Z$.
Let $\XX'$ be the scheme defined by the same equation in
$\PP^2_{\Z_q}(1,\,g+1,\,1)$.
We call $\XX'$ the {\em Zariski closure}\/ of $X \otimes \Q_q$ over $\Spec(\Z_q)$.

By~\cite[Corollary~8.3.51]{liu:agac}, there exists a proper regular model $\XX$ 
of $X \otimes \Q_q$ over $\Z_q$, together with a proper birational morphism 
$\phi: \XX \to \XX'$  that is an isomorphism outside the singular locus of $\XX'$.
Such a model is called a {\em desingularization in the strong sense}\/ of $\XX'$.
If $y$ is a $\Q_q$-rational point on $X$, then, by abuse of notation, we also denote the corresponding
section $y \in \XX(\Z_q)$. 
Our assumptions on $f$ guarantee that there is a unique component $\Gamma_0$ of
the special fiber of $\XX$ which dominates the special fiber
of $\XX'$, since $\XX'$ is normal with an irreducible and reduced special fiber.
Then any $y \in X(\Q_q)$  whose reduction modulo $q$ is nonsingular has
the property that the section $y$ intersects $\Gamma_0$.

\begin{proposition}\label{locaway}
  Let $\XX$ be as above, and let $x$ be a $p$-integral point. 
  The value of $h_q((x)-(\infty),(x)-(\infty))$ depends only on the
  component $\Gamma_x$ of the fiber of $\XX$ above $q$ that $x$ passes through and is
  effectively computable from $\Gamma_x$.
  Moreover, this value  is $0$ if $\Gamma_x = \Gamma_0$ and $q$ does not divide $\atp$.
\end{proposition}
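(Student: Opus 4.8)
The plan is to express $h_q\big((x)-(\infty),(x)-(\infty)\big)$ as an explicit intersection number on the regular model $\XX$. For $q\neq p$, the local pairing of two degree-zero divisors of disjoint support is, by Coleman and Gross~\cite{Col-Gro89}, equal to $\ell_q(q)$ times an intersection number on any regular model, provided one of the divisors is first corrected by a $\Q$-vertical divisor so that it meets every fibral component in degree zero; such a correction exists and is unique modulo the whole fibre, the fibral intersection form being negative semidefinite with kernel spanned by the fibre. Following Gross~\cite{Gross86} and the $p$-adic Arakelov formalism of~\cite{Bes00,Bes12}, this extends -- after a regularisation by the chosen tangent vectors -- to the case of coinciding support. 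Writing $D=(x)-(\infty)$, letting $\overline D=\overline{(x)}-\overline{(\infty)}$ be its Zariski closure in $\XX$, and letting $\Phi=\Phi_q(D)$ be the $\Q$-vertical divisor with $(\overline D+\Phi)\cdot\Gamma=0$ for every component $\Gamma$ of $\XX_q$, I would establish (this being the business of Section~\ref{explicit-intersection}) a formula of the shape
\begin{equation*}
  h_q\big((x)-(\infty),(x)-(\infty)\big)
  = \ell_q(q)\,\big(\Phi\cdot\overline D\big)
    + \ell_q\!\big(\partial_{t(x)}u_x\big) + \ell_q\!\big(\partial_{t_0}u_\infty\big),
\end{equation*}
in which $u_x$ and $u_\infty$ are local parameters on $\XX$ cutting out the sections $\overline{(x)}$ and $\overline{(\infty)}$, and the last two terms -- the tangent-vector ``constant terms'' of the Green function of $D$ at $x$ and at $\infty$ -- measure the discrepancy between $t(x)$ (resp.\ $t_0$) and those parameters.

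I would then exploit that $x$ is $p$-integral. Since $q\neq p$ we have $x\in\UU(\Z_q)$, so $\overline{(x)}$ is a section of $\UU\subseteq\XX$ lying in the affine part; it is therefore disjoint from $\overline{(\infty)}$, and, being a section of a regular arithmetic surface over $\Z_q$, it meets $\XX_q$ transversally at a smooth point of a single multiplicity-one component $\Gamma_x$ (cf.\ Remark~\ref{comprem}). Consequently the defining equations $\Phi\cdot\Gamma=-\overline D\cdot\Gamma$ for the fibral correction have right-hand sides determined by the components met by $\overline{(x)}$ and $\overline{(\infty)}$, namely $\Gamma_x$ and a fixed one, so $\Phi\cdot\overline D$ depends only on $\Gamma_x$; and $\ell_q(\partial_{t_0}u_\infty)$ does not involve $x$ at all. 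Finally, as $t$ is the value at $x$ of the global rational section of $\tX$ dual to $\omega_0=dx/2y$, the quantity $\partial_{t(x)}u_x$ is $2y(x)$ or $f'(x)$, according to whether the reduction of $x$ is moved or fixed by the hyperelliptic involution, and its $q$-adic valuation is the multiplicity of $\Gamma_x$ in the vertical part of $\div_\XX(y)$, resp.\ of $\div_\XX(\omega_0)$ -- again a function of $\Gamma_x$ alone. As all of these are intersection numbers on an explicitly given regular model, they are effectively computable from $\Gamma_x$; this proves the first assertion.

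For the remaining claim, suppose $q\nmid\atp$. Dehomogenising $Y^2=F(X,Z)$ at $X=1$ yields $Y^2=\atp Z\,(1+O(Z))$ near the point at infinity, an equation which reduces modulo $q$ to a smooth one; hence $\infty$ is a smooth point of $\XX'_q$, the morphism $\phi\colon\XX\to\XX'$ is an isomorphism there, $\overline{(\infty)}$ passes through $\Gamma_0$, and the parameter $z=-y/(x^{g+1}\atp)$ differs from the model parameter $Y=y/x^{g+1}$ only by the unit $-1/\atp$; thus $\partial_{t_0}u_\infty$ is a $q$-unit and $\ell_q(\partial_{t_0}u_\infty)=0$. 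If in addition $\Gamma_x=\Gamma_0$, then $\overline{(x)}$ and $\overline{(\infty)}$ meet the same multiplicity-one component, so $\overline D\cdot\Gamma=0$ for every $\Gamma$ and $\Phi=0$; moreover $\overline{(x)}$ reducing into $\Gamma_0$ forces its reduction to be a smooth point of $\XX'_q$, hence $\partial_{t(x)}u_x$ (which is $2y(x)$ or $f'(x)$) is a $q$-unit and $\ell_q(\partial_{t(x)}u_x)=0$. All three terms vanish, so $h_q\big((x)-(\infty),(x)-(\infty)\big)=0$.

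The step I expect to be the main obstacle is the first: making the tangent-vector regularisation precise and proving that the comparison terms $\ell_q(\partial_{t(x)}u_x)$, $\ell_q(\partial_{t_0}u_\infty)$ are, up to $q$-units, intersection numbers of $\overline{(x)}$ and $\overline{(\infty)}$ with divisors supported on $\XX_q$, so that only the component $\Gamma_x$ and no finer $q$-adic information about $x$ can enter. This is exactly where the particular global choices ($t$ dual to $dx/2y$, $t_0$ dual to $\omega_{g-1}$ at infinity, and the compatibility of $z$ with the weighted-projective model noted above) are used, and where $p$-integral points reducing to singular points of $\XX'_q$ -- in particular onto components of $\XX$ contracted by $\phi$ -- call for the explicit local computation of Section~\ref{explicit-intersection}; the existence and uniqueness of the fibral correction, by contrast, is standard.
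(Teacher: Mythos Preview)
Your overall strategy coincides with the paper's: express $h_q((x)-(\infty),(x)-(\infty))$ as an intersection number on the regular model $\XX$, split into the $\Phi$-term and the tangent-normalised self-intersections $(x\,.\,x)$ and $(\infty\,.\,\infty)$, and argue that each piece depends only on the component hit by the relevant section. The treatment of $\Phi$, of $(x\,.\,\infty)=0$ from $p$-integrality, and of the vanishing when $\Gamma_x=\Gamma_0$ and $q\nmid\atp$, is essentially identical to the paper's.

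The difference is in how you handle $(x\,.\,x)$. You compute $\partial_{t(x)}u_x$ in coordinates, case-splitting on whether the reduction of $x$ is Weierstrass, and obtain $2y(x)$ or $f'(x)$. This is correct provided $x-a$ (resp.\ $y$) is a uniformiser for $\overline{(x)}$ on $\XX$, which holds when $x$ reduces to a smooth point of $\XX'$; but on the exceptional components of $\phi$ these coordinate functions may themselves pick up vertical components, so they need not cut out $\overline{(x)}$ alone. You flag exactly this difficulty in your last paragraph and defer it to Section~\ref{explicit-intersection}, but that section only computes values once the dependence on $\Gamma_x$ is already known; it does not supply the missing argument. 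The paper closes this gap in one stroke via Lemma~\ref{withomdiv}(ii): for any section $x$ whose tangent vector is dual to the value of a global meromorphic form $\omega$, one has $(x\,.\,x)=-(x\,.\,\div_\XX(\omega))$. Since the horizontal part of $\div_\XX(\omega_0)$ is $(2g-2)\cdot\infty$, disjoint from a $p$-integral $x$, this gives $(x\,.\,x)=-(x\,.\,V)$ with $V$ vertical---manifestly a function of $\Gamma_x$ only, with no case split and no coordinate choice on the exceptional locus. The analogous statement with $\omega_{g-1}$ handles $(\infty\,.\,\infty)$ uniformly (Lemma~\ref{withomdiv}(iv)). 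So the ``main obstacle'' you anticipate is precisely what the paper's lemma dissolves; your $2y(x)$ and $f'(x)$ are the values of $\partial_t u$ in local trivialisations of the cotangent bundle, and the lemma is the coordinate-free statement that their $q$-valuation is an intersection with $\div_\XX(\omega)$.
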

\begin{proof}
  Let us first recall how $ h_q((x)-(\infty),(x)-(\infty))$ is
  computed~\cite[Section~5]{Gross86}. 
  Let $(\;\,.\,\;)$ denote the rational-valued intersection multiplicity on $\XX$.
  According to~\cite[Theorem~III.3.6]{Lan88}, there is a vertical $\Q$-divisor $\Phi((x) -
  (\infty))$ on $\XX$  such that $D_x = x - \infty +\Phi((x) -
  (\infty))$ satisfies
  $(D_x\,.\, \Gamma) = 0$ for all vertical divisors
  $\Gamma \in \Div(\XX)$.
  The local height pairing $h_q((x) - (\infty), (x) - (\infty))$ is then given by
  \begin{equation}\label{hq-formula}
    h_q((x) - (\infty), (x) - (\infty)) = -(D_x\,.\,D_x)\log q\;.
  \end{equation}
  Up to addition of a rational multiple of the entire special fiber $\XX_q$, which
  is irrelevant for intersections with vertical divisors, $\Phi((x) - (\infty))$ only
  depends on which components the sections $x$ and $\infty$ pass through, see also 
  Section~\ref{explicit-intersection}.
  Note that the intersection $(D_x\,.\,D_x)$ is equal to
  \[
      (x - \infty)^2 + \Phi((x) - (\infty))^2.
  \]
  One computes the first intersection
  using the following rule for self-intersection of
  horizontal components: Let $x\in \XX(\Z_q)$,
  and let $\cansec$ be the chosen tangent vector at $x$. We have the following:
  \begin{itemize}
  \item If $\cansec$ is a
    generator to the tangent bundle at $x$, then $(x \,.\, x) = 0$. 
  \item 
    More generally, if $\alpha \cansec$ is a generator, with $\alpha \in \Q_q$, then
    $(x \,.\, x) = -v_q(\alpha)$,\end{itemize} where $v_q$ is the $q$-adic valuation
  (see 3 of Definition~2.3 in~\cite{Bes12}). 
  We note that   the $p$-integrality of $x$ implies that $(x \,.\, \infty) = 0$.
  
  In our case the tangent vector at $x$ is determined as the dual
  to the value of the differential form $\omega$ at that point. 
  Lemma~\ref{withomdiv}~(ii) below implies that the intersection multiplicity $(x \,.\, x)$ depends only
  on $\Gamma_x$.
  This proves the first statement of the proposition.
  
  Now suppose that $x$ intersects $\Gamma_0$, then $(x \,.\, x)  = 0$
  follows from Lemma~\ref{withomdiv}~(iii).
  If we assume, in addition, that $q$ does not divide $\atp$, then 
  $\infty$ reduces to a nonsingular point and hence the
  corresponding section intersects $\Gamma_0$.
  Since  $x$ and $\infty$ pass
  through the same component, it is clear that we can take $\Phi((x) - (\infty)) = 0$.
  Finally, Lemma~\ref{withomdiv}~(iv) implies that $(\infty \,.\,
  \infty) = 0$.
\end{proof}
\begin{lemma}\label{withomdiv}
  Let $\XX$ and $\omega$ be as above and write the divisor $\div(\omega)
  \in \Div(\XX)$ as $\div(\omega) = H + V$, where $H$ is horizontal and
  $V$ is vertical. Let $x\in X(\Q_q)$.
  \begin{itemize}
  \item[(i)] We have $H = (2g - 2) \cdot \infty$.
  \item[(ii)] For the intersection multiplicity, as normalized by our chosen tangent
    vectors, we have $(x \,.\, x) = - (x \,.\, \div(\omega))$. In particular,
    $(x \,.\, x) = - (x \,.\, V)$ if $x\in \UU(\Z_q)$. 
  \item[(iii)] The component $\Gamma_0$ is not contained in $V$.
  \item[(iv)] We have $(\infty \,.\, \infty) = - (\infty \,.\, W)$,
       where $W$ is the vertical part of $\div(\omega_{g-1})$ and
      the self-intersection is taken with respect to $\cansec_0$.
      If $q$ does not divide $\atp$, then this is equal to~0.
  \end{itemize}
\end{lemma}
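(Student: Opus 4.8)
The four assertions divide into a formal part --- (i), (ii), and the displayed identity in (iv) --- and a more geometric part --- (iii) and the vanishing statement in (iv) --- and the plan is to treat them in that order. Part (i) is immediate: by construction the horizontal part $H$ of $\div(\omega)\in\Div(\XX)$ is the Zariski closure in $\XX$ of the divisor of $\omega=\omega_0$ on the generic fibre $X$, which we have already recorded to be $(2g-2)\cdot\infty$. For (ii), with $x$ a section for which $\omega(x)\neq 0$, I would make the tangent-vector normalisation explicit. Since $\XX$ is regular, the section $x$ meets the special fibre at a point $P$ at which $\XX\to\Spec\Z_q$ is smooth (a standard property of regular arithmetic surfaces, via the fact that a regular point whose residue field agrees with that of the base is smooth). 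Choose local coordinates $(\zeta,q)$ at $P$ with $x$ cut out by $\zeta=0$, so that $d\zeta$ generates $\Omega^1_{\XX/\Z_q}$ near $P$, and write $\omega=u\cdot d\zeta$ with $u$ a rational function. The chosen tangent vector at $x$ is the dual of $\omega(x)=(u|_x)\,d\zeta|_x$, i.e.\ $t=(u|_x)^{-1}(\partial_\zeta)|_x$; hence $(u|_x)\,t$ generates $x^*\mathcal{T}_{\XX/\Z_q}$ and the self-intersection rule recalled in the proof of Proposition~\ref{locaway} gives $(x\,.\,x)=-v_q(u|_x)$. As $d\zeta$ is a local generator of $\Omega^1_{\XX/\Z_q}$, the divisors $\div(\omega)$ and $\div(u)$ coincide near $P$, so $v_q(u|_x)=(x\,.\,\div(\omega))$ and $(x\,.\,x)=-(x\,.\,\div(\omega))$. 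If moreover $x\in\UU(\Z_q)$, then the section $x$ is disjoint from $\infty$, so $(x\,.\,H)=(2g-2)(x\,.\,\infty)=0$ by (i) and hence $(x\,.\,x)=-(x\,.\,V)$. The identity $(\infty\,.\,\infty)=-(\infty\,.\,\div(\omega_{g-1}))$ of (iv), with self-intersection taken relative to $t_0$, follows from the very same computation with $\omega_{g-1}$ and $t_0$ in place of $\omega$ and $t$ (legitimate since $\omega_{g-1}$ is non-vanishing at $\infty$), and since the horizontal part of $\div(\omega_{g-1})$ is supported over $x=0$ and thus disjoint from $\infty$, this equals $-(\infty\,.\,W)$.

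For (iii), I would work at the generic point $\eta_0$ of $\Gamma_0$, which is also the generic point of the special fibre of $\XX'$. As $\XX'$ is normal, the codimension-one point $\eta_0$ is a regular point of $\XX'$, so $\phi$ is an isomorphism over it; and as that special fibre is reduced and $\F_q$ is perfect, $\XX\to\Spec\Z_q$ is smooth at $\eta_0$ and $\mathcal{O}_{\XX,\eta_0}$ is a discrete valuation ring with uniformiser $q$. It then suffices to show $\ord_{\Gamma_0}(\omega)=0$, i.e.\ that $\omega=dx/2y$ generates $\Omega^1_{\XX/\Z_q}$ at $\eta_0$. This is where the hypothesis that $f$ does not reduce to a square modulo $q$ enters: since $0$ is a square it forces $\bar f\not\equiv 0$, so that $y$ is a unit in $\mathcal{O}_{\XX,\eta_0}$ (as $\bar y^2=\bar f(\bar x)\neq 0$ there), and then, using the relation $2y\,dy=f'(x)\,dx$, one checks that $dx/2y$ is a generator of $\Omega^1_{\XX/\Z_q}$ at $\eta_0$ --- at once when $q$ is odd, since then $2y$ is a unit and $dx$ generates $\Omega^1_{\XX/\Z_q}$; and when $q=2$ because non-squareness of $\bar f$ forces $\bar f'\not\equiv 0$, so that $2y$ and $f'(x)$ have the same $q$-adic order and $dx/2y$ is again a generator (the two factors of $q$ cancelling). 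Hence $\Gamma_0\not\subseteq V$.

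For the vanishing statement in (iv), assume that $q$ does not divide $\atp$, so that $\bar f$ has degree $2g+1$. From the local description of $X$ at $\infty$ recorded above, one checks directly that $\XX'$ is already regular, and smooth over $\Z_q$, at $\infty$, so $\phi$ is an isomorphism there; moreover $z$ remains a uniformiser at $\infty$ along the special fibre, so $\omega_{g-1}=(1+O(z))\,dz$ generates $\Omega^1_{\XX/\Z_q}$ in a neighbourhood of $\infty$. Thus $t_0$ extends to a generator of the tangent bundle along the section $\infty$, and the normalisation rule yields $(\infty\,.\,\infty)=0$ (equivalently, $(\infty\,.\,W)=0$).

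The step I expect to be the main obstacle is the local verification in (iii), together with its counterpart at $\infty$ in (iv): one must be sure that $\omega$ itself --- not merely $dx$ --- generates $\Omega^1_{\XX/\Z_q}$ at the generic point of $\Gamma_0$, and the residue characteristic $2$ case is genuinely different, since there $dx$ does vanish along $\Gamma_0$ and the non-squareness hypothesis is precisely what makes this vanishing cancel against the zero of $2y$. Everything else is bookkeeping with the normalisation rule together with standard facts about normal models and desingularisations in the strong sense.
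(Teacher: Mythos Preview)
Your proposal is correct and follows essentially the same plan as the paper: (i) is immediate, (ii) unwinds the tangent-vector normalisation to identify $(x\,.\,x)$ with $-(x\,.\,\div(\omega))$, and the displayed identity in (iv) is the same computation with $\omega_{g-1}$ in place of $\omega$.

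For (iii) and the vanishing in (iv), the paper takes a slightly different route. Rather than splitting on the residue characteristic, it simply cites Liu for the fact that on the affine piece $\UU\times\Z_q$ the form $\omega=dx/2y$ generates the relative cotangent bundle (and dualizing sheaf), and then deduces (iii) indirectly by applying (ii) to sections through nonsingular points of $\Gamma_0$: for such $x$ the tangent vector $t$ is a generator, so $(x\,.\,x)=0=-(x\,.\,V)$, whence $\Gamma_0\not\subset V$. For (iv) it passes to the other affine patch $v^2=\frev(u)$ with $u=1/x$, $v=y/x^{g+1}$, where $\omega_{g-1}=-du/2v$, and invokes the same generation fact; the hypothesis $q\nmid\atp$ makes $\infty$ nonsingular modulo $q$. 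Your direct generic-point computation reaches the same conclusion and is more self-contained, but watch the $q=2$ step: the claim that ``$2y$ and $f'(x)$ have the same $q$-adic order'' is not correct --- $f'(x)$ is a unit at $\eta_0$ while $2y$ has order~$1$. What you need is that from $f'(x)\,dx=2y\,dy$ with $f'(x)$ a unit one gets $dx=(2y/f'(x))\,dy$, so $dx$ has order~$1$ as a section of $\Omega^1$ (which is generated by $dy$), and dividing by $2y$ (also of order~$1$) gives the generator $dy/f'(x)$. Your parenthetical ``the two factors of $q$ cancelling'' shows you have the right picture; just tighten the wording.
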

\begin{proof}
  The proof of (i) is obvious. 
  We now turn to the proof of (ii).
  Using cotangent vectors, the formula for the self-intersection $(x \,.\, x)$ is as follows:
  if $\alpha \omega(x)$ is a generator for the cotangent bundle at $x$, with $\alpha \in \Q_p$, then 
  $(x \,.\, x) = v_q(\alpha)$. But this is
  exactly 
  $ -(x \,.\, \div(\omega) )$. Indeed, for any line bundle $\LL$ on
  $\XX$ and any  meromorphic section $s$ of $\LL$ that has no zeros or
  poles at $x$ on the generic fiber, the pullback $x^*\LL$ is a free
  $\Z_q$-module of rank~1 and $s(x)$ is a non-zero element of
  $x^*\LL\otimes \Q_q$. When $s(x)\in x^*\LL $ and $\alpha \cdot s(x)$ is a generator for $x^*\LL$ we have
  \[
  v_q(\alpha) = -\log_q \#(x^*\LL / s(x) \Z_q),
  \]
  and the right hand side is well-known to be equal to $ -(\div(s) \,.\,
  x)$. The behavior of intersection with a principal divisor implies that
  this continues to hold without the assumption $s(x)\in x^*\LL $.
   When $x\in \UU(\Zip)$ we know that $(x \,.\, \infty) = 0$, so 
  using (i) we get $(x \,.\, x )= - (x \,.\, V)$.
  
  For (iii), note that   on the integral affine subscheme    
  $\UU\otimes \Z_q$ of $\XX'$, both the relative cotangent
  bundle and the relative dualizing sheaf over $\Spec(\Z_q)$ are generated by 
  $\omega|_{\UU\otimes \Z_q}$, see~\cite[\S6.4]{liu:agac}. 
  Hence $\cansec$ generates the tangent bundle on $\UU$. But since
  $\XX'$ and $\XX$ are isomorphic outside the singular locus of $\XX'$ by
  assumption, (ii) implies
  \[
  0 = (x\, . \,x) = -(V\, .\, x)
  \]
  for any $\Q_q$-rational point $x$ such that $x$ does not reduce to
  $\infty$ or a singular point modulo $q$. 
  So $\div(\omega)$ cannot contain $\Gamma_0$, since all such points reduce
  to $\Gamma_0$.
  
  The proof of the first part of (iv) is analogous to the proof of (ii).
  If $q$ does not divide $\atp$, then $\infty$ is nonsingular modulo $q$.
  The form  $\omega_{g-1}$ is just $-du / 2v$
  on the affine patch $U_\infty$ of $X$ given
  by $v^2 = \frev(u)$, where $u = 1 / x$ and $ v = y / x^{g+1}$.
  So (iv) follows in a manner similar to (iii)
  because $du / 2v$ generates $\Omega^1_{U_\infty / \Spec(\Z_q)}$.
\end{proof}
\begin{remark}\label{g1minimal}
  If $g=1$ and the given equation of $C$ is a minimal Weierstrass equation,
  then we can take $\XX$ to be the minimal regular model of $C$.
  In this case it is easy to show that $\div(\omega) = 0$.
\end{remark}
\begin{remark}
  The computation of the function $\rho$ requires the following
  ingredients:
  \begin{itemize}
  \item Coleman integration, including iterated Coleman integrals~\cite{balakrishnan:iterated}, for
    the computation of $\tau$;
  \item computation of $p$-adic height pairings~\cite{Bes-Bal10,BaMuSt12};
  \item a basis for the Mordell-Weil group of $J$ tensored with
    $\Q$. Typically one computes this by searching for points of small height in
    $J(\Q)$ (or differences of rational points on $X$) until we have found $g$
    independent points.
    Note that we first need to verify that the rank of $J(\Q)$ is indeed $g$, for
    instance using~2-descent on $J$, cf. \cite{stoll:2-descent}.
  \end{itemize}
  Clearly, computing the height pairings between all pairs of elements
  of the above basis, together with the computation of the integrals of
  the $\omega_i$ on elements of this basis, suffices for the
  determination of the constants $\alpha_{ij}$. Integrals of holomorphic
  forms give the $f_i$'s, hence the $g_{ij}$'s, and iterated Coleman
  integrals give $\tau$. Note that one can get bounds on the number of
  integral points without computing iterated integrals, see
  Remark~\ref{boundingredients}.
\end{remark}

\begin{remark}\label{practical}
  Here we describe how to use these ideas to give an algorithm to find integral points on a genus $g$ hyperelliptic curve $X$ with Mordell-Weil rank $g$:
  \begin{enumerate}
  \item Let $D_1, \ldots, D_g \in \Div^0(X)$ be representatives of the elements of a basis for the Mordell-Weil group of the Jacobian
    tensored with $\Q$.  We compute the global $p$-adic height pairings
    $h(D_1,D_1),\, h(D_1,D_2),\ldots, h(D_g, D_g)$ and the $\frac{g(g+1)}{2}
    \times \frac{g(g+1)}{2}$ matrix  with entries  $\frac{1}{2}(f_k(D_i)f_l(D_j) + f_l(D_i)f_k(D_j))$
    for $1 \le i \le j \le g$ and $0 \leq k \leq l \leq g-1$. Solving a linear system (see Example~\ref{ex:g2}) gives the vector of $\alpha_{ij}$ values in \eqref{rhofunction}.
  \item Compute the finite set $T$ of possible values that $\rho$ can take on $p$-integral
    points. 
  \item Compute the value of $\tau(P)$ for some point $P$ on the curve. Use \eqref{splittrick} and the power series expansions of the double and single Coleman integrals  to give a power series describing $\tau(t)$ in each residue disk.
  \item The integral points are solutions $t$ to $\tau(t) - \sum
    \alpha_{ij}f_i(t)f_j(t) = a$
    across the various residue disks, where $a$ runs through the elements of
    $T$.
  \item Use the Mordell-Weil sieve to identify solutions that
    correspond to integral points and recover these points.
  \end{enumerate}
  We show how to carry out Steps 1--4 in practice in
  Section~\ref{examples}. Step 5 will be discussed in future work.
\end{remark}

\begin{remark}\label{higherfields}
  It would be interesting to try to use height pairings to find integral
  points for number fields other than $\Q$. Note that the height
  pairing provides a $\Qp$-valued equation. This has to be combined
  with $\Qp$ equations obtained using the extension of the explicit Chabauty
  method to number fields~\cite{Sik13} to get at least as many
  equations as the degree of the field, which is the $\Qp$-dimension
  of the manifold obtained by tensoring with $\Qp$. In certain situations, one may
  obtain more than one equation from the height pairing by using more
  than one idele class character. Let us describe one simple situation
  where this can be achieved (this will be studied in greater generality in forthcoming work).
  
  Suppose $L$ is an imaginary quadratic extension with class number
  $1$ where the prime $p$ remains inert. Since the group of units is
  torsion, one can lift any $\ell_p: L_p^\times \to \Qp$ to an idele class character: A uniformizer at the
  (principal) prime ideal $(\alpha)$ will be sent to
  $-\ell_p(\alpha)$. Composing the $p$-adic log with a basis of the
  $\Qp$-linear functionals $L_p \to \Q_p$, we get two different $\ell_p$,
  hence two idele class characters, from which we get two $p$-adic
  equations.
\end{remark}
\section{Computation of $\tau$}\label{sec:examples}

To obtain numerical examples, we need to compute the function $\rho$
from \eqref{rhofunction}. This function involves height function
computations for the determination of the constants $\alpha_{ij} $
appearing there, for which we have the algorithm of
\cite{Bes-Bal10,BaMuSt12}, and the computation of Coleman
integrals of holomorphic forms, which can be performed using the algorithms of~\cite{BBK09}. 

The remaining component is the computation of the function $\tau$, which
is an iterated Coleman integral \cite{balakrishnan:iterated}. In this section, we discuss how to leverage information about local $p$-adic height pairings to compute iterated integrals from tangential points as an alternative to direct computation of these integrals.

First, when $\tau$ is to be evaluated at a Weierstrass point, we extend the result of  \cite[Proposition~6.1]{Bes12} to hyperelliptic curves:
\begin{lemma}
  Let $P=(A,0)$ be a Weierstrass point. Then we have
  \begin{equation*}
    \tau(P) = \frac{1}{2} (\log(f'(A)) +\log(\atp)).
  \end{equation*}
\end{lemma}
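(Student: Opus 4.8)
The plan is to adapt the proof of \cite[Proposition~6.1]{Bes12}; the only genuinely new input is a pair of explicit local expansions. Recall from the proof of Theorem~\ref{cutting-function} that, choosing the idele class character so that $\ell_p=\log$, the function $\tau$ is the local height pairing at $p$,
\[
  \tau(x)=h_p\bigl((x)-(\infty),(x)-(\infty)\bigr),
\]
computed using the tangent vector $t$ (the dual of $\omega$) at $x$ and the tangent vector $t_0$ (the dual of $\omega_{g-1}$) at $\infty$. The key observation is that the rational function $x-A$ on $C$ has a double zero at the Weierstrass point $P=(A,0)$ and a double pole at $\infty$, so that $\div(x-A)=2\bigl((P)-(\infty)\bigr)$.

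First I would invoke bilinearity of $h_p$ together with the basic property $h_p\bigl(D,\div(g)\bigr)=\log\bigl(g(D)\bigr)$ of the Coleman--Gross pairing \cite{Col-Gro89}, where, since $\div(x-A)$ and $(P)-(\infty)$ share support, the value $g(D)$ must be read as the regularized evaluation prescribed by the tangent vectors attached to $D$. This yields
\[
  2\,\tau(P)=h_p\bigl((P)-(\infty),\,\div(x-A)\bigr)
           =\log\Bigl(\frac{(x-A)^{(t)}(P)}{(x-A)^{(t_0)}(\infty)}\Bigr),
\]
where $(x-A)^{(t)}(P)$ is the leading coefficient of $x-A$ in a local parameter $z$ at $P$ with $\partial_t z=1$, and likewise $(x-A)^{(t_0)}(\infty)$ with respect to a parameter normalized by $t_0$ at $\infty$. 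As in \cite{Bes12}, the bookkeeping of the tangent-vector normalizations when the two divisors have overlapping support has to be done carefully; reproducing this regularized reciprocity computation, which is the content of the argument in \cite[Proposition~6.1]{Bes12}, is the step I expect to be the main obstacle.

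It then remains to compute the two leading coefficients. At $\infty$ the parameter $z=-y/(x^{g+1}\atp)$ is already normalized with respect to $t_0$ (as noted before Theorem~\ref{tauthm}), and \eqref{xintermss} gives $x-A=\atp^{-1}z^{-2}(1+\cdots)$, so $(x-A)^{(t_0)}(\infty)=\atp^{-1}$. At $P$ the local parameter is $y$, and $f(A)=0$ forces $x-A=f'(A)^{-1}y^2(1+O(y^2))$; hence $\omega=dx/2y=f'(A)^{-1}(1+O(y^2))\,dy$, the parameter $z$ normalized by $t$ satisfies $y=f'(A)\,z\,(1+O(z^2))$, and therefore $x-A=f'(A)\,z^2(1+O(z^2))$, giving $(x-A)^{(t)}(P)=f'(A)$. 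Substituting,
\[
  2\,\tau(P)=\log\bigl(f'(A)\bigr)-\log\bigl(\atp^{-1}\bigr)=\log\bigl(f'(A)\bigr)+\log(\atp),
\]
which is the assertion. The genus $g>1$ and the non-monic leading coefficient $\atp$ enter only through these two elementary expansions.
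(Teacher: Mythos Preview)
Your proposal is correct and follows essentially the same approach as the paper's proof: both use $\div(x-A)=2\bigl((P)-(\infty)\bigr)$ together with the regularized reciprocity $h_p(D,\div(g))=\log g[D]$ to reduce $\tau(P)$ to the two normalized leading coefficients, obtaining $f'(A)$ at $P$ and $\atp^{-1}$ at $\infty$. The only differences are cosmetic: the paper defers the computation of the normalized value at $P$ to \cite[Proposition~6.1]{Bes12} (which you re-derive from scratch) and cites the later displayed formula~\eqref{infval} rather than~\eqref{xintermss} for the value at $\infty$.
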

\begin{proof}
By the properties of the local height pairing, the value of $\tau$ at $P$
is $1/2$ of the normalized value of $x-A$ on the divisor $(P) -
(\infty)$. As shown in \cite[Proposition~6.1]{Bes12}, the normalized value at $P$ is
$\log(f'(A))$, while by~\eqref{infval} the normalized value at $\infty$
is $\atp^{-1}$.
\end{proof}

In general, one way to avoid directly computing iterated integrals from tangential points is to compute the iterated integral with one specific endpoint\footnote{In this section we will be using the coordinate functions extensively. We therefore denote, unlike in previous sections, points on the curve by $P$, $Q$, etc.}
$P$ (chosen in a certain favorable way) in some indirect
way. The integral for a general point can then be computed from this
(see \eqref{splittrick} below). For example, in the case of elliptic curves,
in~\cite{BKK11} the point $P$ was taken to be a global two-torsion
point or a tangential point at infinity, while in~\cite{Bes12} a (non-global) two- or three-torsion point was used.

In the hyperelliptic curve case, we can also integrate from finite Weierstrass points, but this requires working over totally ramified extensions of $\Q_p$, which is, in practice, quite slow.  As an alternative approach, we describe a technique for computing
$\tau(P)$ for a general point $P$ that does not use the description of
$\tau$ in terms of iterated integrals in Theorem~\ref{tauthm} but
instead works directly with the description of $\tau$ as a local
height pairing. Given $\tau(P)$, the value of $\tau$ at any other
point $P'$ may be computed using the following formula.
\begin{equation}\label{splittrick}
\begin{split}\tau(P') &= -(g-1)\log(\atp) -2 \int_{t_0}^{P'} \left(\sum_{i=0}^{g-1} \omega_i  \bom_i \right)\\
&= -(g-1)\log(\atp) -2\sum_{i=0}^{g-1} \left( \int_{t_0}^{P}  \omega_i  \bom_i +
  \int_{P}^{P'} \omega_i  \bom_i  + \int_{P}^{P'} \omega_i
  \int_{t_0}^{P} \bom_i \right)\\
&= \tau(P) -2\sum_{i=0}^{g-1} \left( \int_{P}^{P'} \omega_i  \bom_i  + \int_{P}^{P'} \omega_i  \int_{t_0}^{P} \bom_i \right)\;\\
      \end{split}      
    \end{equation}
where we are using a decomposition of iterated integrals which may be
found, for example, in~\cite[p.~288]{BKK11}.

We note that for applications, the description of $\tau$ in terms of
local power series expansions is vital. Thus, while we are able to
compute $\tau(P')$ for any given $P'$, we will be using
\eqref{splittrick} after computing $\tau(P)$ for one, or possibly a
finite number, of values of $P$, as this last formula does in fact
give such a power series expansion.

The strategy for computing $\tau(P)$ for a particular point $P$ in a residue disk is to interpret it as a
normalized local height
and to directly compute this using an extension of the techniques
of~\cite{Bes-Bal10}.

To this end, it is first of all somewhat helpful to use cotangent
vectors rather than tangent vectors for the normalization. Let $\omega$ denote a choice, for
each point on the curve $C$, of a cotangent vector at that point. This
could obviously come from a differential form but could also be just
an arbitrary assignment. We can write $\hloc(D_1,D_2)_\omega$ for the
local height pairing of $D_1$ and $D_2$, computed with respect to the
tangent vectors dual to the cotangent vectors specified in
$\omega$. This notation has the advantage that if $f$ is an
isomorphism of curves, one
has, by an easy argument
\begin{equation*}
  \hloc(f^\ast D_1, f^\ast D_2)_{f^\ast \omega} = \hloc(D_1,D_2)_\omega
\end{equation*}
where $f^\ast \omega$ means the pointwise pullback of the cotangent
vectors with respect to the differential of $f$ at a point (this
coincides with the usual pullback for holomorphic forms). Another important
observation is that \[\hloc(D_1,D_2)_{-\omega} =
\hloc(D_1,D_2)_\omega ,\] because of the property~\cite[Definition
2.3]{Bes12} of the local height pairing. For the hyperelliptic case,
the assignment $\omega$ corresponds to one form away from infinity, and to
the value of another form there, but both are anti-symmetric with
respect to the hyperelliptic involution $w$, so with that choice of
$\omega$ we have
\begin{equation*}
  \hloc(w^\ast D_1,w^\ast D_2)_{\omega} = \hloc(D_1,D_2)_\omega\;.
\end{equation*}
Thus, with this choice of tangent vectors, the splitting into
symmetric and anti-symmetric components of~\cite{Bes-Bal10} extends
without any difficulty. In particular, we can use formula (19)
there. Using the shorthand $\hloco{D}$ for $\hloco{D,D}$, and taking
$D=(P)-(\infty)$, we have

\begin{align*}
  \tau(P) &= \hloco{(P)-(\infty)}\\
  &= \frac{1}{4}\hloco{(P)-(\infty) + w((P)-(\infty) )}
  + \frac{1}{4}\hloco{(P)-(\infty) - w((P)-(\infty) )}
  \\ 
 &=  \frac{1}{4}\hloco{(P) + (w(P))-2(\infty) }
 + \frac{1}{4}\hloco{P - (w(P))}\;.
\end{align*}

Suppose now that $P=(a,b)$ is a point with non-Weierstrass reduction.
We compute $\hloco{(P)+ (w(P))-2(\infty) }$. Since
$(P)+ (w(P))-2(\infty) $ is the divisor of the function
$x-a$, this local term is $\log$ on the normalized
value of $x-a$ on $(P)+ (w(P))-2(\infty)$. To compute
this, we note that a normalized parameter with respect to $\omega$ at
$P$ is
\begin{equation}\label{eq:normpar}
  z=\frac{x-a}{2b},
\end{equation}
and so the normalized value $(x-a)[P]$ is
just $2b$. Similarly, the normalized value at $w(P)$ is $-2b$. On the other hand, using the  normalized parameter $-y/(\atp x^{g+1})$ with
respect to $t_0$ at infinity, the
normalized value of $x-a$ at infinity is
\begin{equation}\label{infval}
  \lim_{x\to \infty} (x-a)\frac{y^2}{\atp^2 x^{2g+2}}= \lim_{x\to \infty}
  (x-a)\frac{f(x)}{\atp^2 x^{2g+2}}=\atp^{-1},
\end{equation}
recalling that $f$ has leading coefficient $\atp$.
Collecting all the data and using~\cite[Definition~2.3]{Bes12} we find
\[\hloco{(P) + (w(P))-2(\infty) }= \log(-4b^2\atp^2)= \log(4b^2) + 2\log(\atp) \;.\]

Next we compute the anti-symmetric part, $\hloco{(P) -
  (w(P))}$. By~\cite[Proposition~3.4]{Bes12}, the local height pairing
for two divisors (which are not necessarily disjoint) is given by
\begin{equation*}
  \hloco{D_1,D_2}= G_{D_1}[D_2]
\end{equation*}
where, by~\cite[Theorem~7.3]{Bes00}, if the degree of $D_1$ is $0$, the Green
function $G_{D_1}$ coincides with with the Coleman integral $\int
\omega_{D_1}$. Here $\omega_{D_1} $ is, as in~\cite{Bes-Bal10}, a
certain form of the third kind whose residue divisor is $D_1$, and the
value of this integral on $D_2$ is normalized by taking constant terms
with respect to local parameters which are normalized with respect to
the chosen cotangent vectors.

In our case we have $D_1=D_2=D = (P)-(\infty)$. As
in~\cite[Algorithm~5.8]{Bes-Bal10}, we can decompose $\omega_D$ as
$\nu-\eta$, where $\nu$ is an arbitrary form of the third kind with
residue divisor $D$, while $\eta$ is holomorphic and is computed from
$\nu$ and from the decomposition~\eqref{eq:decompi} as
in~\cite[Subsection~5.6]{Bes-Bal10}. We consequently have
\[\hloco{P - (w(P))}  = \int_{w(P)}^{P} \nu - \int_{w(P)}^{P} \eta\;.\]

The integral of $\eta$ can be computed using the techniques in \cite{BBK09}.  
The integral  $\int_{w(P)}^{P} \nu$ has to be normalized since $\nu$
has poles at the endpoints $P$ and $w(P)$. To properly account for this
normalization, we use an auxiliary point $Q$ in the same residue disk
as $P$. 

More concretely, by~\cite[Proposition~5.13]{Bes-Bal10} we may take $\nu =
\frac{b dx}{y(x-a)}$, which is anti-symmetric. Using this and
breaking the path of integration into several pieces, we obtain
\begin{align*}
  \int_{w(P)}^{P} \nu = \int_{w(P)}^{Q} \nu + \int_Q^{P} \nu &=  \int_{w(Q)}^{P} \nu + \int_Q^{P} \nu ,\\
  &= \int_Q^{P} \nu + \int_{w(Q)}^Q \nu + \int_Q^{P} \nu \\
  &= -2\int_{P}^Q \nu + \int_{w(Q)}^Q \nu\;.
\end{align*}
The first integral is a
tiny integral, but again, since it has a pole at $P$, this has to be normalized with respect to the chosen cotangent vector, which we do below. The second integral was considered
in~\cite[Algorithm~4.8]{Bes-Bal10}; here we make a small modification, also discussed below, to handle a subsequent part of the computation (that of a tiny integral with a pole within the disk of integration).

\subsection{Computing  $\int_{P}^Q \nu$} 
Let $z$ be a parameter
at $P$, e.g., \eqref{eq:normpar}, which is normalized with respect to $\omega$. Writing
$\nu$ in terms of $z$ we get $\nu= (z^{-1} +a_0 +a_1z +\cdots)
dz$, since $\nu$ has a simple pole with residue $1$ at $P$. The normalized
integral $\int_{P}^Q \nu$ is the normalized integral of $\nu$
evaluated at $Q$. The normalization means that the constant term with
respect to $z$ of the integral is $0$, i.e., that it is of the form
$\log(z)+ a_0 z + a_1 z^2/2+ \cdots$. If we use the parameter $t=(x-a)
= 2b z$ instead we can rewrite this as $\log(t) - \log(2b)$ plus some
power series in $t$, where this power series is nothing but the term-by-term integral of $\nu- t^{-1} dt$. Thus, in terms of the
parameter $t$, the formula for the integral is
\begin{equation*}
  \int_{P}^Q \nu = \log(t(Q))- \log(2b)+ \int_0^{t(Q)} (\nu-
  t^{-1} dt).
\end{equation*}

\subsection{Computing  $\int_{w(Q)}^Q \nu$}By~\cite[(14)]{Bes-Bal10}, we know 
\begin{equation}\label{alpha}\int_{w(Q)}^Q \nu = \frac{1}{1-p}\left(\Psi(\alpha) \cup \Psi(\beta) + \sum_{A\in\mathcal{S}} \Res_A
    \left(\alpha\int\beta\right) -2  \int_{Q}^{\phi(Q)}\nu\right), \end{equation}
    where $\alpha = \phi^*\nu - p \nu$ (a form constructed via a $p$-power lift of Frobenius $\phi$), $\mathcal{S}$ is the set of closed points, $\beta$ has residue divisor $Q - w(Q)$, and $\Psi$ is a logarithm map. 

Each of the quantities in \eqref{alpha} can be computed using the techniques in \cite{Bes-Bal10}, except for $\Res_{A\in\mathcal{S}} \left(\alpha\int\beta\right)$, since $\beta$ has poles in certain residue disks which, by construction, are disks which contain points in $\mathcal{S}$ -- i.e., disks where the integration will take place. Here we give an elementary lemma which allows us to extend the techniques of \cite{Bes-Bal10} to handle this case.
\begin{lemma}Suppose $\beta$ has residue divisor $Q - wQ$, where $Q$ is assumed to be non-Weierstrass. The integral of $\beta$ computed between points $P, P'$ distinct from $Q$ but contained in the residue disk of $Q$, written as 
\[\int_P^{P'} \beta = \int_P^{P'} \frac{f(x(Q)) - f(x)}{y(x-x(Q))(y(Q) + y)}dx+ \log\left(\frac{x(P')-x(Q)}{x(P)-x(Q)}\right)\] converges. \end{lemma}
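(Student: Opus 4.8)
The plan is to recognize the stated expression as the decomposition of $\int_P^{P'}\beta$ obtained by peeling off an exact logarithmic differential with an explicit antiderivative, leaving a form that is regular throughout the residue disk of $Q$. First I would make $\beta$ explicit: by \cite[Proposition~5.13]{Bes-Bal10}, applied with $Q$ in place of $P$ and in exact analogy with the form $\nu$ above, one may take $\beta = y(Q)\,dx/\bigl(y\,(x-x(Q))\bigr)$; this has residue $+1$ at $Q$ and $-1$ at $w(Q)$ since $y$ is a unit near $Q$ (here we use that $Q$ is non-Weierstrass, so $y(Q)\ne 0$, and that $x-x(Q)$ is a local parameter at both $Q$ and $w(Q)$).

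Next I would subtract $\frac{dx}{x-x(Q)} = d\log(x-x(Q))$, whose Coleman integral is $\log(x-x(Q))$ for the fixed branch of $\log$, so that $\int_P^{P'}\frac{dx}{x-x(Q)} = \log\bigl((x(P')-x(Q))/(x(P)-x(Q))\bigr)$; this is well defined because $P,P'$ are distinct from $Q$ and, lying in the residue disk of $Q$, are also distinct from $w(Q)$, so that $x(P)-x(Q)$ and $x(P')-x(Q)$ are nonzero. For the remaining form, the Weierstrass equation gives $y(Q)^2-y^2=f(x(Q))-f(x)$, hence $y(Q)-y=\bigl(f(x(Q))-f(x)\bigr)/\bigl(y(Q)+y\bigr)$ and therefore
\begin{equation*}
  \beta-\frac{dx}{x-x(Q)}=\frac{(y(Q)-y)\,dx}{y\,(x-x(Q))}=\frac{f(x(Q))-f(x)}{y\,(x-x(Q))\,(y(Q)+y)}\,dx,
\end{equation*}
which is exactly the first integrand in the statement. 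Integrating both pieces over $[P,P']$ gives the displayed identity.

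It then remains to check convergence of the first integral, i.e.\ that the form $(y(Q)-y)\,dx/\bigl(y(x-x(Q))\bigr)$ extends to a regular differential on the whole residue disk $D_Q$ of $Q$, so that its Coleman integral over $[P,P']\subset D_Q$ is the term-by-term integral of a genuinely convergent power series in the parameter $x-x(Q)$. Indeed $y(Q)-y$ vanishes at $Q$ (equivalently, the numerator $f(x(Q))-f(x)$ is divisible by $x-x(Q)$ as a polynomial in $x$), cancelling the apparent simple pole along $x=x(Q)$, while $y$ is a unit on $D_Q$ since its reduction is $\bar y(Q)\ne 0$; the factor $y(Q)+y$ appearing in the denominator of the form as written in the statement is likewise a unit on $D_Q$ (its reduction is $2\bar y(Q)$, nonzero as $p$ is odd). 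Hence the form has no poles on $D_Q$ and the tiny integral converges. The one point that needs a little care is precisely this regularity check, in particular the non-vanishing of $y$ and $y(Q)+y$ on all of $D_Q$, which is exactly where the non-Weierstrass hypothesis on $Q$ is used.
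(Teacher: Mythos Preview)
Your argument is correct and follows essentially the same route as the paper: both take $\beta = y(Q)\,dx/\bigl(y(x-x(Q))\bigr)$, split off $d\log(x-x(Q))$ to produce the logarithm, rewrite the remainder via $y(Q)^2-y^2=f(x(Q))-f(x)$, and then verify convergence by noting that $y$ and $y(Q)+y$ are units on the residue disk (from the non-Weierstrass hypothesis) while $x-x(Q)$ divides $f(x(Q))-f(x)$. Your write-up is in fact slightly more careful in making explicit why $y(Q)+y$ is a unit (its reduction is $2\bar y(Q)\ne 0$ since $p$ is odd).
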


\begin{proof}Since $P, P'$ are in the same residue disk, we compute a locally analytic parameterization $(x(t),y(t))$ from $P$ to $P'$ and use this to rewrite:
\begin{align*}\int_P^{P'} \beta &= \int_P^{P'} \frac{y(Q)  dx}{y(x-x(Q))}\\
&= \int_P^{P'} \frac{y(Q) dx}{x - x(Q)}\left(\frac{1}{y} - \frac{1}{y(Q)}\right) +\int_P^{P'} \frac{dx}{x - x(Q)}\\
&= \int_0^1 \frac{f(x(Q)) - f(x(t))}{y(t)(x(t)-x(Q))(y(Q) + y(t))}dx(t)+ \log\left(\frac{x(P')-x(Q)}{x(P)-x(Q)}\right). \end{align*}  It remains to check the convergence of the integrand.  Since $Q$ is non-Weierstrass, both $y(t)$ and $y(Q) + y(t)$ are units in $\Z_p[[t]]$. 

The claim is then that $v_p\left(\frac{f(x(Q)) - f(x(t))}{x(t)-x(Q)}\right) \geq 0$. Indeed, since $x(Q) - x(t)$ divides $f(x(Q)) - f(x(t))$, we see that $v_p\left(\frac{f(x(Q)) - f(x(t))}{x(t)-x(Q)}\right)$ is simply the $p$-adic valuation of the linear coefficient of $f(x)$, which is assumed to be integral.
\end{proof}

\section{Computing all possible values of $\rho$ on $\UU(\Zip)$}\label{explicit-intersection}
Let $q$ be a prime and let $\phi:\XX \to \XX'$ denote a desingularization in the strong sense of
the Zariski closure $\XX'$ of $C:=X\otimes \Q_q$ over $\Spec(\Z_q)$.

For $D\in\Div(C)$ we also write, by abuse of notation, $D$ for the Zariski
closure (with multiplicities) of $D$ in $\Div(\XX)$.
If $D\in\Div^0(C)$, 
then there exists a vertical $\Q$-divisor $\Phi_q(D)\in\Div(\XX)\otimes\Q$ such that 
${D}+\Phi_q(D)$ has trivial intersection multiplicities with all vertical
divisors on $\XX$, see \cite[Theorem~III.3.6]{Lan88}.
Since $\Phi_q(D)$ is vertical itself, we have
\[(\Phi_q(D)\,.\,{D})=-\Phi_q(D)^2\ge0.\]
It follows from the proof of Theorem~\ref{cutting-function},
from~\eqref{hq-formula} and from Lemma~\ref{withomdiv} that a point $x\in
\UU(\Zip)$ satisfies
\[
\rho(x) = -\sum_{q\ne p} \left(\Phi_q((x) - (\infty))^2 + (x
\,.\,V_q) + (\infty \,.\, W_q)\right)\log q\,,
\]
where $V_q$ is the vertical part of $\div(\omega)$ 
and $W_q $ is the vertical part of $\div(\omega_{g-1})$ over $q$.
Note that only bad primes can contribute toward the sum on the right hand
side.
By Lemma~\ref{withomdiv}, the $p$-adic constant $s_q$ from the proof of Theorem~\ref{cutting-function} is
therefore equal to 
\begin{equation}\label{aq}
    s_q = \left(\Phi_q((x) - (\infty))^2 + (\infty \,.\, W_q)\right)\log(q)\,.
\end{equation}
\subsection{Computing local contributions}\label{comp_vert}
In this subsection we discuss how the quantity
\begin{equation} \label{vert_ints}
    \Phi_q((x) - (\infty))^2 + (x \,.\,V_q)+ (\infty \,.\, W_q)
\end{equation}
can be computed for a given prime $q$ and $x \in \UU(\Z_q)$.
Let $M_q=(m_{ij})_{i,j}$ denote the intersection matrix of
the special fiber $\XX_q=\sum a_i\Gamma_i$.
Its entries are given by $m_{ij}=(a_i\Gamma_i\,.\,a_j\Gamma_j)$.
For simplicity, we drop the subscript $q$ in the following, as we will work over a
fixed prime $q$.

The matrix $M$ has rank $n-1$, where $n$ is the number of irreducible components
of $\XX_q$, and its kernel is spanned by the vector
$(1,\ldots,1)^T$, see \cite[\S~III.3]{Lan88}.
Let $M^+$ denote the Moore-Penrose pseudoinverse of $M$ introduced
in~\cite{penrose:inverse} and let $u(x)$ denote the column
vector whose $i$th entry is $(x-\infty\,.\,a_i\Gamma_i)$.
Then we have 
\begin{equation}\label{phi_formula}
\Phi((x)-(\infty))^2=u(x)^TM^+u(x)\,.
\end{equation}

Now we discuss the computation of $(x\,.\,V)$. 
Recall that $\div(\omega) = H + V$, where $H = (2g-2)\cdot \infty \in \Div(\XX)$ is horizontal and $V$ is vertical.
We call a $\Q$-divisor $\KK$ on $\XX$ a {\em canonical $\Q$-divisor on $\XX$}\/ if
$\OO(\KK)\cong \Bom_{\XX/\Spec(\Z_q)}$, where $\Bom_{\XX/\Spec(\Z_q)}$ is the
relative dualizing sheaf of $\XX$ over $\Spec(\Z_q)$.

Since  $\div(\omega)|_{X}=(2g-2)\cdot (\infty)$ is a canonical divisor on $X$, it
follows from \cite[Proposition~2.5]{curilla-kuehn:fermat} that we can extend $H$ to a canonical $\Q$-divisor
$\KK = H + V'$ on $\XX$ if $V'$ is a vertical $\Q$-divisor such that $\KK$ satisfies
the adjunction formula
\begin{equation}\label{adj-form}
    (\KK\,.\,\Gamma) = -\Gamma^2 + 2p_a(\Gamma) - 2
\end{equation}
for all components $\Gamma$ of the special fiber of $\XX$, where $p_a$ denotes
the arithmetic genus.
Such an extension always exists (see
\cite[\S2]{kuehn-mueller:lower_bounds} for a more general
statement), but it is not unique, because the validity of~\eqref{adj-form} is
unchanged if we add a multiple of the entire special fiber $\XX_q$ to $V'$.
Under the additional condition that $V'$ does not contain the component
$\Gamma_0$, the $\Q$-divisor $V'$ is uniquely determined.
Recall that $\Gamma_0$ is the unique component of
the special fiber of $\XX$ which dominates the special fiber
of $\XX'$.

It is not true in general that $\div(\omega)$ is a canonical divisor on $\XX$,
since $\Bom_{\XX/\Spec(\Z_q)}$ may differ from the relative cotangent bundle
$\Omega^1_{\XX/\Spec(\Z_q)}$. 
But restricting to the smooth locus $\XX^{\mathrm{sm}}$, we have
\[
    \Bom_{\XX/\Spec(\Z_q)}|_{\XX^{\mathrm{sm}}}\cong \Omega^1_{\XX/\Spec(\Z_q)}|_{\XX^{\mathrm{sm}}}
\]
by \cite[Corollary~6.4.13]{liu:agac}.
Hence $V-V'$ is supported in
the vertical divisors of multiplicity at least~2, which immediately implies
\[
   (x\,.\,V) = (x\,.\,V').
\]
Therefore it suffices to compute $V'$.
The adjunction formula~\eqref{adj-form} and the constraint $\Gamma_0 \notin
\supp(V')$ reduce the computation of $V'$ to 
a system of linear equations which has a unique solution by the above discussion.

The computation of $(\infty\,.\, \div(\omega_{g-1}))$ is analogous: We find a
vertical $\Q$-divisor $W'$ such that adding $W'$ to the horizontal part of
$\div(\omega_{g-1})$ gives a canonical $\Q$-divisor which does not contain
$\Gamma_0$ in its support.
Then we know that 
\[
     (\infty\,.\, \div(\omega_{g-1}))= (\infty\,.\, W) = (\infty \,.\, W').
\]

\subsection{Local contributions for $\rho$ in genus~2}\label{genus2}
Keeping the notation of the previous subsection, we now assume, in addition,
that $q$ does not divide $\atp$.
In this situation one can compute all possible values of
\[
\Phi((x) - (\infty))^2 + (x \,.\,V)
\]
if the genus of $X$ is~2 and $\XX_q$ has semistable reduction purely from the
reduction type.
The possible special fibers of minimal regular models of curves of genus~2 have been
classified by Namikawa-Ueno \cite{namikawa-ueno:classification}.
In particular, it is known that if $\XX$ is semistable, then, in the notation of~\cite{namikawa-ueno:classification}, the reduction
types of $\XX$ are either $[I_{n_1}-I_{n_2}-m]$ or $[I_{n_1-n_2-n_3}]$, where $m,n_1,n_2,n_3\ge0$
are integers.

In Table~\ref{tab:g2-semistable} we list the stable model corresponding to each reduction
type and the relation to the discriminant $\Delta$ of a minimal Weierstrass model of $X$.
\begin{table}[h!]
  \begin{center}
    \begin{tabular}{| c| l | c|}
    \hline
reduction type & stable reduction & $v_q(\Delta)$ \\
\hline
$[I_{0-0-0}]$& smooth curve of genus~2 & 0\\
\hline
$[I_{n_1-0-0}]$& genus~1 curve with a unique node & $n_1$\\
\hline
$[I_{n_1-n_2-0}]$& genus~0 curve with exactly 2 nodes & $n_1+n_2$\\
\hline
$[I_{n_1-n_2-n_3}]$& union of~2 genus~0 curves, & $n_1+n_2+n_3$\\
&intersecting in~3 points& \\
\hline
$[I_0-I_0-m]$& union of two smooth genus 1 curves, & $12m$\\
&intersecting in~1 point &\\
\hline
$[I_n-I_0-m]$& union of a smooth genus~1 curve and a & $12m+n_1$\\
&genus~0 curve with a unique node,&\\
&intersecting in~1 point& \\
\hline
$[I_{n_1}-I_{n_2}-m]$& union of~2 genus~0 curves with a unique &$12m+n_1+n_2$\\
&node, intersecting in~1 point& \\
    \hline
    \end{tabular}
  \end{center}
  \caption{Semistable reduction types in genus~2}\label{tab:g2-semistable}
\end{table}

Because of our assumptions on $f$, the reduction type $[I_{n_1-n_2-n_3}]$
cannot occur, since the reduction of $f$ modulo $q$ would have to be a square.

We now list all possible values
$\Phi((x)-(\infty))^2$, where $x \in X(\Q_q)$.
We also list all possible values $(x\,.\,V)$ and the possible sums
$\Phi((x)-(\infty))^2 + (x\,.\,V)$.
In the present case of semistable reduction, we have $V=V'$ in the notation of
the previous subsection.

First we consider reduction type $[I_{n_1-n_2-0}]$, where $n_1, n_2 \ge 0$.
It is easy to see that we have $V = V' = 0$. 
The set of all possible values for $\Phi((x) - (\infty))^2$ is 
\[
 \left\{-\frac{i(n_1-i)}{n_1}\right\}\cup\left\{-\frac{j(n_2-j)}{n_2}\right\},\quad 0\le i \le \lfloor
n_1/2\rfloor,\;0\le j \le \lfloor n_2/2\rfloor,
\]
see \cite[\S6]{mueller-stoll}.
The possible values for reduction type $[I_{n_1}-I_{n_2}-m]$, where
$n_1, n_2, m \ge0$, depend on where $\infty$ intersects the special fiber.
This information can be obtained easily from the equation of $X$.
If $\infty$ intersects a component corresponding to $I_{n_1}$, then it is easy
to see that
$(x\,.\,V)\in\{0,-1,\ldots,-m\}$.
Moreover, we have
\begin{equation}\label{phis}
   \Phi((x) - (\infty))^2 \in \left\{ -i,\;  -\frac{j \cdot
    (n_1 - j)}{n_1} ,\; -m - \frac{k \cdot (n_2 - k)}{n_2} \right\},
\end{equation}
and 
\begin{equation}\label{sums}
 \Phi((x)-(\infty))^2 + (x\,.\,V) \in \left\{ -2i,\; -\frac{j \cdot
    (n_1 - j)}{n_1},\; -2m - \frac{k \cdot (n_2 - k)}{n_2}     \right\},
\end{equation}
where $i\in\{0,\ldots,m\},\; j\in \{0,\ldots,\lfloor n_1/2\rfloor\}$ and $ k
\in\{0,\ldots,\lfloor n_2/2\rfloor\}$.
If $\infty$ intersects a component corresponding to $I_{n_2}$, then
we get all possible values upon swapping both $n_1$ and $n_2$ and $j$ and $k$ in~\eqref{phis}
and~\eqref{sums}.
\begin{remark}
    If we have two singular points in the reduction of $X$ modulo $q$ which are 
    conjugate over $\mathbb{F}_{q^2}$, then we know
    that $x$ and $\infty$ intersect $\Gamma_0$ and hence $\Phi((x)-(\infty))^2 +
    (x\,.\,V) = 0$.
\end{remark}
\begin{remark}
Note that the denominators are contained in $\{1, n_1,
n_1n_2\}$.
\end{remark}
\begin{remark}\label{nonmonic}
If $q$ divides $\atp$, then both $V$ and $\Phi((x) - (\infty))^2$ depend on where $\infty$ intersects the special
fiber; moreover, $W$ depends on the horizontal part of $\div(\omega_{g-1})$.
Hence none of the summands in~\eqref{vert_ints} can be read off purely from
the reduction type and the component that $x$ intersects.
Nevertheless, for a given curve $X$ the computation of all possible values of $\rho$ on $p$-integral
points is not more difficult using the techniques presented in
Subsection~\ref{comp_vert}.
\end{remark}

\section{Bounds on the number of $p$-integral points}
\label{sec:bounds}

In this section we sketch how one may use the techniques of this paper
to obtain a bound on the number of $p$-integral points on a hyperelliptic
curve. We hope that these techniques will eventually lead to an
effective bound that will only depend on the genus and the bad
reduction types.

The following is a trivial extension of Lemma
2 in~\cite{Col85a}.
\begin{lemma}\label{keyinteglem}
  Let $f(t)=\sum a_n t^n$  be a power series with coefficients in
  $\Cp$. Let $\beta$ be the function whose graph is the bottom of the
  Newton polygon for $f$, and let $k\ge 0$ be an integer. Then, for any
  $s>0$ we have
  \begin{equation*}
    \# \{z\in \Cp,\; v(z)\ge s,\; f(z)=0\} \le \max \{n\ge k,\; v(a_k) - s
    (n-k) \ge \beta(n)\},
  \end{equation*}
  where $v$ is the $p$-adic valuation normalized so that $v(p)=1$.
\end{lemma}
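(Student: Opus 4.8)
The plan is to follow the argument of Lemma~2 in~\cite{Col85a} essentially verbatim, checking that the normalization $v(p)=1$ and the presence of the parameter $k$ do not introduce any difficulty. The key observation is that for a power series $f(t)=\sum_{n\ge0} a_n t^n$ over $\Cp$, the number of zeros $z$ with $v(z)\ge s$ is controlled by the Newton polygon: a zero of valuation $\ge s$ corresponds to a segment of the Newton polygon of slope $\le -s$ (or, in Coleman's convention on the ``bottom of the Newton polygon,'' to the portion of $\beta$ lying above the line of slope $-s$ through an appropriate vertex). Concretely, if $N$ is the largest index such that the vertex $(N,\beta(N))$ still lies on or above the line through $(k,v(a_k))$ of slope $-s$, i.e.\ $v(a_k)-s(N-k)\ge\beta(N)$, then all segments of the Newton polygon corresponding to slopes $\le-s$ occur among the first $N$ segments, so there are at most $N$ such zeros counted with multiplicity.

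First I would recall the standard facts about Newton polygons of entire (or suitably convergent) $p$-adic power series: $\beta$ is the lower convex hull of the points $(n,v(a_n))$, its slopes are non-decreasing, and the number of zeros $z$ with $v(z)=\lambda$, counted with multiplicity, equals the horizontal length of the segment of $\beta$ of slope $-\lambda$ (Weierstrass preparation / the theory of Newton polygons over $\Cp$). Summing over $\lambda\ge s$ gives that $\#\{z: v(z)\ge s,\ f(z)=0\}$ equals the total horizontal length of all segments of slope $\le-s$. Next I would translate this length bound into the inequality in the statement: since $\beta$ is convex and $\beta(k)\le v(a_k)$, the line $L(n)=v(a_k)-s(n-k)$ through $(k,v(a_k))$ of slope $-s$ lies on or above $\beta$ exactly for $n$ up to the last index $N$ where a slope-$\le-s$ segment can still reach; hence every zero of valuation $\ge s$ is accounted for by an index $n\le N$, giving $\#\{z: v(z)\ge s,\ f(z)=0\}\le N=\max\{n\ge k: v(a_k)-s(n-k)\ge\beta(n)\}$. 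The restriction $n\ge k$ in the maximum is harmless: $n=k$ always satisfies the inequality since $\beta(k)\le v(a_k)$, so the set on the right is nonempty, and zeros of valuation $\ge s$ can only be ``absorbed'' at indices beyond $k$ because the slopes to the left of $k$ are irrelevant to the count once we measure length from the line through $(k,v(a_k))$.

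The only point requiring a little care — and the step I expect to be the mild obstacle — is bookkeeping the role of $k$ and of $v(a_k)$ versus $\beta(k)$: one must make sure that using $v(a_k)$ in place of $\beta(k)$ only weakens the inequality (it does, since $v(a_k)\ge\beta(k)$, so the maximum can only increase), and that no zeros of valuation $\ge s$ are lost by starting the count at $k$ rather than at $0$. For this I would argue that any segment of $\beta$ of slope $\le -s$ lying entirely to the left of $n=k$ contributes zeros of valuation $\ge s$ that are nevertheless still bounded in number by $k$, which is $\le N$; alternatively, and more cleanly, I would simply note that the inequality in the conclusion is an upper bound, so it suffices to exhibit \emph{some} valid $N$, and the displayed $N$ works by the convexity estimate above. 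Everything else is the verbatim content of~\cite[Lemma~2]{Col85a} with $v$ normalized by $v(p)=1$ instead of by the residue degree, which changes nothing in the combinatorial argument.
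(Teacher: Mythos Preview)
Your proposal is correct and follows essentially the same route as the paper: both arguments use that if $m$ is the number of zeros of valuation $\ge s$, then the Newton polygon slopes up to abscissa $m$ are all $\le -s$, so (for $m\ge k$) convexity gives $\beta(m)\le\beta(k)-s(m-k)\le v(a_k)-s(m-k)$, placing $m$ in the set whose maximum is taken. The paper simply states this in three lines rather than unpacking the Newton polygon background and the role of $k$ as you do.
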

\begin{proof}
  Suppose that there
  are $m$ zeros of $f$ with valuation $\ge s$. Since $\beta(k) \le
  v(a_k)$ it suffices to show that
  $\beta(m) \ge \beta(k)-s(m-k)$. But this is clear, as by assumption, by
  the properties of the Newton polygon, the slopes of $\beta$ up to the
  point where $x=m$ are at most $-s$.
\end{proof}
Obviously, in the above lemma, any lower bound for $\beta$ may be used
instead of $\beta$ to obtain an upper bound on the number of
zeros. To use this in our setting, we need the following obvious
lemma.
\begin{lemma}\label{logbounds}
  Suppose, under the assumptions of the above lemma, that $f$ is an
  $r$-fold iterated integral of forms $\omega_1,\ldots,\omega_r$ which have
  integral coefficients, and that furthermore the constants of
  integration at each integration are integral as well. Then we have
  the lower bound $\beta(k)\ge  -r\log_p(k)$.
\end{lemma}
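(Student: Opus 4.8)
The plan is to prove the bound by induction on $r$, the number of iterated integrations, tracking how each integration step transforms the Newton polygon of the power series. The base case $r = 0$ is the statement that a power series with integral coefficients has $\beta(k) \ge 0 = -0\cdot\log_p(k)$, which is immediate since $v(a_k) \ge 0$ for all $k$.

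For the inductive step, suppose $f$ is obtained from an $(r-1)$-fold iterated integral $h$ by integrating against a form $\omega_r = \left(\sum_{n\ge 0} c_n t^n\right)dt$ with integral coefficients $c_n$, and with an integral constant of integration. Then the relevant operation sending $h(t) = \sum_{m\ge 0} b_m t^m$ to $f$ is
\begin{equation*}
  f(t) = (\text{integral constant}) + \int_0^t h(u)\,\omega_r = (\text{integral constant}) + \sum_{k \ge 1}\left(\sum_{i+j = k-1} \frac{b_i c_j}{k}\right) t^k\,.
\end{equation*}
So the coefficient $a_k$ of $f$ satisfies $v(a_k) \ge \min_{i+j=k-1}\bigl(v(b_i) + v(c_j)\bigr) - v(k) \ge v(b_{k-1}) + 0 - v(k)$, using $v(c_j)\ge 0$ and taking the $j=0$ term as one candidate for the minimum — more usefully, for \emph{every} $i \le k-1$ we get $v(a_k) \ge v(b_i) - v(k)$. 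By the inductive hypothesis $v(b_i) \ge \beta_h(i) \ge -(r-1)\log_p(i)$ for the Newton-polygon function $\beta_h$ of $h$, and one checks $v(k) \le \log_p(k)$. Taking $i = k$ (legitimate after noting $v(b_k)\ge \beta_h(k)$ and that $\beta_h$ may be used in place of $\beta$ by the remark following Lemma~\ref{keyinteglem}, or more carefully taking $i$ near $k$), we obtain $v(a_k) \ge -(r-1)\log_p(k) - \log_p(k) = -r\log_p(k)$. Since this holds for all coefficients $a_k$, the lower bound $\beta(k) \ge -r\log_p(k)$ for the Newton polygon of $f$ follows, because the Newton polygon lies on or above any line through points dominated by the valuations of the coefficients; more precisely, the function $k \mapsto -r\log_p(k)$ is concave, so its graph lies below the lower convex hull of the points $(k, v(a_k))$, which is exactly $\beta$.

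The main obstacle, and the only place requiring genuine care, is the concavity bookkeeping in the last step: one must verify that the pointwise bound $v(a_k) \ge -r\log_p(k)$ actually passes to the lower convex hull $\beta$, i.e. that $-r\log_p$ is concave (true, since $\log_p(k) = \log k/\log p$ and $\log$ is concave) so that it stays below $\beta$ rather than crossing it. A secondary point to handle cleanly is the estimate $v(k) \le \log_p(k)$, which holds because if $p^e \mid k$ then $p^e \le k$, hence $e \le \log_p(k)$; this is exactly what lets the $1/k$ factor from integration be absorbed into one extra $\log_p(k)$. Everything else is routine term-by-term manipulation of the iterated-integral expansion.
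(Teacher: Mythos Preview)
The paper does not prove this lemma; it is stated as ``obvious'' and left to the reader. Your inductive strategy is the natural one and leads to a correct proof, but your write-up contains two genuine slips that need fixing.

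First, in the inductive step you write
\[
v(a_k) \ge \min_{i+j=k-1}\bigl(v(b_i)+v(c_j)\bigr) - v(k) \ge v(b_{k-1}) - v(k),
\]
and then ``for every $i\le k-1$ we get $v(a_k)\ge v(b_i)-v(k)$''. Both of these go the wrong way: a minimum is bounded \emph{above} by any single term, not below. What you actually need is the uniform lower bound $v(b_i)+v(c_j)\ge -(r-1)\log_p(i)\ge -(r-1)\log_p(k)$ for \emph{all} $i\le k-1$ (using $v(c_j)\ge 0$, the inductive hypothesis, and monotonicity of $\log_p$), which then gives $\min_i(\ldots)\ge -(r-1)\log_p(k)$ and hence $v(a_k)\ge -(r-1)\log_p(k)-v(k)\ge -r\log_p(k)$. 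This is the estimate you want, but the logic you wrote down does not establish it.

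Second, your concavity claim is backwards. Since $\log$ is concave, $k\mapsto -r\log_p(k)$ is \emph{convex}, not concave. Fortunately convexity is exactly what is required: $\beta$ is the greatest convex function lying below the points $(k,v(a_k))$, so any convex function $g$ with $g(k)\le v(a_k)$ for all $k$ automatically satisfies $g\le\beta$. Had $-r\log_p$ truly been concave, the implication you assert would fail (a concave lower bound on the coefficient valuations can cross above the Newton polygon between vertices). With these two corrections your argument goes through cleanly.
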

In~\cite{Col85a}, Coleman uses the following corollary to get an upper
bound on the number of rational points.
\begin{corollary}
  Suppose under the assumptions of Lemma~\ref{keyinteglem} that $f'$ has
  integral coefficients and that its reduction has order $k$. Then, 
  \begin{equation*}
    \# \{z\in \Cp,\; v(z)\ge s,\; f(z)=0\} \le \max \{n\ge k,\;  s
    (n-k) \le [\log_p(n)]\}\;.
  \end{equation*}
\end{corollary}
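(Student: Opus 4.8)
The plan is to deduce the corollary from Lemma~\ref{keyinteglem}, substituting for $\beta$ the lower bound supplied by Lemma~\ref{logbounds} (as the remark preceding that lemma permits), and then using the hypothesis on the order of $\bar{f'}$ to pin down the reference coefficient occurring in Lemma~\ref{keyinteglem}.

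First I would translate the two hypotheses on $f'$ into estimates on the coefficients $a_n$ of $f=\sum a_n t^n$. Writing $f'(t)=\sum_{n\ge0}b_n t^n$, we have $n a_n=b_{n-1}$ for $n\ge1$. Integrality of $f'$ gives $v(b_{n-1})\ge0$, hence $v(a_n)\ge-v(n)$; and since $p^{v(n)}\mid n$ forces $v(n)\le\log_p n$, with $v(n)$ an integer, this improves to $v(a_n)\ge-[\log_p n]$. By convexity of the Newton polygon, this pointwise estimate gives $\beta(n)\ge-[\log_p n]$ for all $n\ge1$, which is exactly Lemma~\ref{logbounds} in the case $r=1$ (with $f$ viewed as the antiderivative of the integral power series $f'$). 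The hypothesis that $\bar{f'}$ has order $k$ says $v(b_j)>0$ for $j<k$ while $v(b_k)=0$; equivalently, $v(a_i)\ge1-v(i)$ for $1\le i\le k$ and $v(a_{k+1})=-v(k+1)\le0$.

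Next I would apply Lemma~\ref{keyinteglem} with $\beta$ replaced throughout by $n\mapsto-[\log_p n]$. If $m$ denotes the number of zeros of $f$ with $v(z)\ge s$, then $m$ is the horizontal length of the slope-$\le-s$ portion of the Newton polygon of $f$, and its right-hand endpoint is a vertex, so $\beta(m)=v(a_m)\ge-[\log_p m]$ by the first step; on the other hand, descending with slope $\le-s$ from the point above $k+1$ gives $\beta(m)\le v(a_{k+1})-s(m-k-1)=-v(k+1)-s(m-k-1)$. Combining these, and using $v(k+1)\ge0$ together with $v(m)\le[\log_p m]$, one obtains $s(m-k-1)\le[\log_p m]$, which is the asserted inequality $s(m-k)\le[\log_p m]$ up to the passage from the index $k+1$ back to $k$; in either case $m$ is then seen to lie in the set whose maximum is the claimed bound (the case $m\le k$ being immediate, since $s(k-k)=0\le[\log_p k]$).

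I expect the main obstacle to be this final bookkeeping, and in particular getting the exact shape of the bound: one must juggle two independent roundings to the integer part (one from $v(n)\le[\log_p n]$, one from the lower bound on $\beta$) and decide whether to apply Lemma~\ref{keyinteglem} at the index $k$ or at $k+1$ --- the order-$k$ hypothesis fixes $v(a_{k+1})$ exactly but only bounds $v(a_k)$ from below --- so that the reference coefficient is genuinely absorbed rather than leaving a spurious additive term. Everything else is routine: verifying the lower bound $\beta(n)\ge-[\log_p n]$ on the relevant range, handling a possible zero of $f$ at the origin (where counting distinct zeros rather than zeros with multiplicity is what keeps the bound clean), and noting that a non-integral constant term of $f$ only contributes zeros of negative valuation and so does not affect the count.
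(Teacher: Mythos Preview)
The paper does not actually prove this corollary --- it is quoted as Coleman's result and immediately set aside, the authors' own bound (Theorem~\ref{boundthm}) being derived directly from Lemmas~\ref{keyinteglem} and~\ref{logbounds}. So there is no paper proof to compare against; I assess your argument on its own.

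Your strategy is correct: apply Lemma~\ref{keyinteglem} at the index $k+1$, where the order-$k$ hypothesis on $\bar{f'}$ pins down $v(a_{k+1})=-v(k+1)\le0$ exactly, and use $\beta(m)=v(a_m)\ge-[\log_p m]$ at the Newton-polygon vertex $m$. This yields $s(m-k-1)\le[\log_p m]$, hence $m\le\max\{n\ge k+1:\,s(n-k-1)\le[\log_p n]\}$. The gap is your final reconciliation: you assert that ``in either case $m$ is then seen to lie in the set whose maximum is the claimed bound,'' i.e.\ $\{n\ge k:s(n-k)\le[\log_p n]\}$. But $s(m-k-1)\le[\log_p m]$ is strictly weaker than $s(m-k)\le[\log_p m]$, so this does not follow. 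In fact the bound as printed appears to be off by one: with $p=5$, $s=1$, and $f(t)=25+5t+t^2$ one has $f'=5+2t$ integral with $\bar{f'}=2t$ of order $k=1$, and both roots of $f$ have valuation exactly $1$; yet $\max\{n\ge1:n-1\le[\log_5 n]\}=1$. Your bound gives $2$, which is sharp here. So the honest conclusion of your argument is the corollary with $k$ replaced by $k+1$ throughout (equivalently $s(n-k-1)$ in place of $s(n-k)$), and you should flag the index discrepancy rather than wave it away.

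One minor slip: your assertion that convexity alone gives $\beta(n)\ge-[\log_p n]$ for all $n\ge1$ is not quite right, since $n\mapsto-[\log_p n]$ is not convex and $v(a_0)$ is unconstrained. But you only use this inequality at the vertex $m$, where it holds directly because $\beta(m)=v(a_m)$, so the argument survives.
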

Coleman then uses a bound on the sum of all possible $k$'s in the
above corollary, over all power series occurring as local power series
expansions of a Coleman integral, to obtain a global bound on the
number of rational points, when Chabauty's method applies, which
depends only on the genus and the number of points in the reduction of
the curve.

As an application of our techniques, we give a fairly easily computed
bound on the number of integral points. The reader will recognize that
one may improve the bound and remove restrictions at the cost of
making the bound depend on more complicated data.

Specializing  Lemmas \ref{keyinteglem}~and~\ref{logbounds} to the case
$r=2$, $k=s=1$, we get that for power series satisfying the conditions
of Lemma~\ref{logbounds}, the number of $p\Zp$ roots is bounded from
above by
\begin{equation}\label{upsilon}
  \Upsilon(v(a_1))\;,\text{ with } \Upsilon(m) = \max \{n\ge 1,\;
  m - (n-1) \ge -2\log_p(n)\}\;.
\end{equation}
We note, for ease of application that
\begin{enumerate}
\item if $m+3<p$, then $\Upsilon(m)= m+2$.
\item if $m+5<p^2$, then $\Upsilon(m)= m+4$.
\end{enumerate}

\begin{theorem}\label{boundthm}
  Suppose, under the assumptions of Theorem~\ref{cutting-function},
  that the polynomial $f$ has no roots modulo $p$.  Let $\alpha_{ij}$
  be the constants appearing in the theorem and let $\Upsilon$ be the
  function from~\eqref{upsilon}.
  
  Let $P_m$ be $\Zp$-points of $X$ lifting all $\Z/p$ points of the
  special fiber save $\infty$, and for each $m$, let $t_m$ be a local parameter at
  $P_m$, whose reduction modulo $p$ is also a parameter. Define
  constants $\beta_{im}$, $\gamma_{im}$, $\gambar_{im}$ by
  \begin{equation*}
    \beta_{im} = \frac{\omega_i}{dt_m}(P_m)
  \end{equation*}
  and
  \begin{equation*}
    \gamma_{im}= \int_{\cansec_0}^{P_m} \omega_i\;,\; 
    \gambar_{im}= \int_{\cansec_0}^{P_m} \bom_i\;.
  \end{equation*}
  Let $L = \max\{0, -v(\alpha_{ij}), 0\le i,j\le g-1\} $ and let
  \begin{equation*}
    \delta_m = 2\sum_{i=0}^{g-1} \beta_{im}\gambar_{im} + \sum_{i\le j}
    \alpha_{ij} (\beta_{im} \gamma_{jm}+\beta_{jm} \gamma_{im}).
  \end{equation*}
  Then, the number of integral solutions to $y^2=f(x)$ is bounded from
  above by
  \begin{equation*}
    (1+\prod_q (m_q-1)) \sum_m \Upsilon(v_p(\delta_m) + L)
  \end{equation*}
  where, as in Remark~\ref{comprem}, $m_q$ denotes the number of
  multiplicity $1$ components at the fiber above $q$.
\end{theorem}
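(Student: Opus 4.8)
The plan is to combine three ingredients: the finiteness of the value set $T$ together with the explicit bound $|T|\le 1+\prod_q(m_q-1)$ coming from Theorem~\ref{cutting-function} and Remark~\ref{comprem}; the decomposition of the $p$-adic points of $X$ into residue disks; and Coleman's root-counting estimates (Lemmas~\ref{keyinteglem} and~\ref{logbounds}), applied to $\rho$ one residue disk at a time.

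First I would reduce to a local count. Any integral solution $x=(a,b)$ of $y^2=f(x)$ is in particular $p$-integral, so $\rho(x)\in T$ by Theorem~\ref{cutting-function}. Since $f$ has no root modulo $p$, the reduction of $X$ has no Weierstrass point, so the residue disks at $p$ other than the one containing $\infty$ are exactly those represented by the chosen lifts $P_m$, each identified through $t_m$ with the open unit disk; moreover every integral point lies in one of these disks, at a parameter value with $v_p(t_m)\ge 1$. Hence
\begin{equation*}
  \#\UU(\Z)\;\le\;\sum_m\#\{z\in D_m:\rho(z)\in T\}\;\le\;\sum_m\sum_{b\in T}\#\{z\in D_m:\rho(z)=b\},
\end{equation*}
so it is enough to show $\#\{z\in D_m:\rho(z)=b\}\le \Upsilon(v_p(\delta_m)+L)$ for each $m$ and each $b\in T$; summing over $b$ and $m$ and using $|T|\le 1+\prod_q(m_q-1)$ then yields the asserted bound.

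Now fix $m$ and $b\in T$. By~\eqref{splittrick} (equivalently Theorem~\ref{tauthm}) together with~\eqref{rhofunction}, $\rho$ restricts on $D_m$ to a power series $\rho_m$ in $t_m$ with constant term $\rho(P_m)$. Differentiating at $P_m$ along $t_m$: in the $\tau$-contribution the iterated terms $\int_{P_m}^{\bullet}\omega_i\,\bom_i$ vanish to second order and contribute nothing, while $\int_{P_m}^{\bullet}\omega_i\cdot\int_{t_0}^{P_m}\bom_i$ contributes $\beta_{im}\gambar_{im}$; and since $f_i(P_m)=\int_\infty^{P_m}\omega_i=\gamma_{im}$, the product rule gives $\frac{d}{dt_m}g_{ij}(P_m)=\beta_{im}\gamma_{jm}+\beta_{jm}\gamma_{im}$. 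Thus the linear coefficient $a_1$ of $\rho_m(t_m)-b$ is exactly $-\delta_m$, so $v_p(a_1)=v_p(\delta_m)$. For the remaining coefficients, note that $f$ having no root modulo $p$ makes $y$ a unit on $D_m$, so $\omega_i=x^i\,dx/2y$ has a $p$-integral $t_m$-expansion, and by~\eqref{eq:bom} so does $\bom_i$ (the base-change coefficients being $p$-integral for $p$ away from $2\atp$, the remaining cases handled by a bounded modification). Writing each product $f_if_j$ by integration by parts as a sum of $2$-fold iterated integrals, every summand of $\rho_m-\rho(P_m)$ is a scalar multiple of an iterated integral of length at most $2$ of such forms, with all inner constants of integration zero except for the occurrences of $\gamma_{im}$ and $\gambar_{im}$, and with outer scalars $\alpha_{ij}$ shifting valuations by at most $L=\max\{0,-v_p(\alpha_{ij})\}$. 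Lemma~\ref{logbounds} then bounds the Newton polygon $\beta$ of $\rho_m(t_m)-b$ below by $\beta(n)\ge -L-2\log_p(n)$ for $n\ge 1$, and Lemma~\ref{keyinteglem} with $k=s=1$ gives
\begin{equation*}
  \#\{z\in D_m:\rho(z)=b\}\;\le\;\max\{n\ge1:\;v_p(\delta_m)-(n-1)\ge\beta(n)\}\;\le\;\Upsilon(v_p(\delta_m)+L),
\end{equation*}
the last inequality being the definition~\eqref{upsilon} of $\Upsilon$ together with $v_p(a_1)=v_p(\delta_m)$.

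The main obstacle is exactly the Newton-polygon estimate in this final step. The Coleman integrals $\gamma_{im}$ and $\gambar_{im}$ occurring as constants of integration in the expansion of $\rho_m$, as well as the constant term $\rho(P_m)-b$ itself, need not be $p$-integral, so one must argue that in a disk which actually contains an integral point with $\rho$-value $b$ these quantities are controlled (and that otherwise the count is trivially $0$), in order to legitimately absorb them into the bound $\Upsilon(v_p(\delta_m)+L)$; one also has to pin down the $p$-integrality of the $t_m$-expansions of $\omega_i$ and $\bom_i$ and the precise bookkeeping of the $\alpha_{ij}$-shift. The other steps — the passage to residue disks, the identification of the linear coefficient with $-\delta_m$, and the applications of Lemmas~\ref{keyinteglem} and~\ref{logbounds} — are routine.
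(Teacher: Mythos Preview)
Your approach is the paper's approach: reduce to residue disks, identify the linear coefficient of $\rho_m$ as $\pm\delta_m$, clear the $\alpha_{ij}$-denominators by the factor $p^L$, and apply Lemmas~\ref{keyinteglem} and~\ref{logbounds} with $r=2$, $k=s=1$ together with the bound $|T|\le 1+\prod_q(m_q-1)$ from Remark~\ref{comprem}. The only substantive point where the paper goes beyond your write-up is your worry about the constant term $\rho(P_m)-b$: the paper observes that a non-integral constant coefficient can only \emph{lower} the left end of the Newton polygon, hence only flattens it and improves the root bound from Lemma~\ref{keyinteglem}; no integrality of $\rho(P_m)-b$ is needed.

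As for the integrality of the inner constants $\gamma_{im}$ and $\gambar_{im}$ (and of the local expansions of $\bom_i$), the paper does not argue this separately: after multiplying by $p^L$ it simply declares it ``evident'' that $p^L\rho$ satisfies the hypotheses of Lemma~\ref{logbounds} except possibly at the constant term. So the concern you flag is precisely the place where the published proof is terse; you have not missed an argument that the paper supplies.
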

\begin{proof}
  Note that the combined assumption that we are looking at integral,
  rather than $p$-integral points, and that $f$ has no roots modulo $p$,
  means that we only need to consider non-Weierstrass residue
  disks. Multiplying by $p^L$ clears denominators from the
  $\alpha_{ij}$ and it is therefore evident that the expansion of $p^L
  \rho$ around $P_m$ satisfies 
  the condition of Lemma~\ref{logbounds}, except possibly that the constant
  coefficient of the power series is not integral. The first coefficient
  is exactly $p^L \delta_m$. Thus, to conclude the proof by Theorem~\ref{cutting-function},
  Lemma~\ref{keyinteglem}, Lemma~\ref{logbounds}, and Remark~\ref{comprem}, it
  suffices to note that having a non-integral value for the constant
  coefficient of the power series flattens the Newton polygon and so
  improves the bound on the number of solutions.
\end{proof}
\begin{remark}\label{boundingredients}
  To use the theorem above to compute a bound on the number of integral
  points, one needs the following:
  \begin{itemize}
  \item Computation of (non-iterated) Coleman integrals. These are required for
    the computation of $\gamma_{ij}$ , $\gambar_{ij}$, and $\alpha_{ij}$.
  \item A basis of $J(\Q)\otimes\Q$.
  \item Computation of $p$-adic height pairings~\cite{Bes-Bal10,BaMuSt12}.
  \end{itemize}
  In particular, explicit iterated Coleman integrals are not required. One may use them,
  however, or the techniques of Section~\ref{sec:examples}, to get the values of
  $\tau$ at the points $P_m$, and these may be used to give a more
  precise bound, typically with more work.
\end{remark}

At the moment, we are unfortunately unable to produce a
Coleman-Chabauty-like bound, because we are unable to control a
quantity like the sum of all $k$'s in Coleman's case for iterated
Coleman integrals. We plan to return to this question in future work.

\begin{remark}
  We may observe that we can treat primes in the denominators
  of our points, at the cost of extending the set $T$, as long as we
  have a bound on the denominator.
\end{remark}

\section{Examples}\label{examples}
Here we give some examples illustrating the techniques described in this paper.
\subsection{Genus 1}
Let $X: y^2 = x^3 - 3024x + 70416$, which has minimal model with Cremona label
``57a1''.  We have that $X(\Q)$ has Mordell-Weil rank 1, with $P = (60,-324)$ a generator of the free part of the Mordell-Weil group. We take as our working prime $p = 7$. 
Since the given equation of $X$ is not a minimal Weierstrass equation,
Remark~\ref{g1minimal} does not apply. 
The only primes $q$ where the Zariski closure of $X$ over $\Spec(\Z_q)$
is not already regular are $q=2,\,3$.
Using {\tt Magma} \cite{magma}, we compute a desingularization $\XX$ of the
Zariski closure of $X$ in the strong sense.
The special fibers are of the form 
\[
\XX_2 = \Gamma_{2,0} + \Gamma_{2,1} + 2\Gamma_{2,2}
\]
and 
\[
\XX_3 = \Gamma_{3,0} + \Gamma_{3,1} + \Gamma_{3,2} + 2\Gamma_{3,3},
\]
where all components $\Gamma_{q,i}$ have genus~0 and $\Gamma_{q,0}$ is the
component which $\infty$ intersects.
The intersection matrices are given by:
\begin{equation*}
  M_2=\left(\begin{array}{rrr}
      -2&  0 & 1\\
      0& -2 & 1\\
      1 & 1& -1\\
    \end{array}\right)\;,
  \quad
  M_3=\left(\begin{array}{rrrr}
      -2 & 0&  0&  1\\
      0 &-4&  2&  1\\
      0  &2& -2 & 0\\
      1 & 1&  0& -1\\
    \end{array}\right)\;.
\end{equation*}
Writing $\div(\omega) = V_2 + V_3$, we have 
\begin{equation} \label{Vg1}
  V_2 = -\Gamma_{2,1} + a_{2,2}\Gamma_{2,2}\quad\textrm{ and }\quad V_3 = -\Gamma_{3,1} -
  \Gamma_{3,2} + a_{3,3}\Gamma_{3,3},
\end{equation}
computed using the method alluded to in Section~\ref{explicit-intersection}.
Here $a_{2,2}$ and $a_{3,3}$ are irrelevant, since no $\Z_q$-section can
intersect a component of multiplicity at least~2.
\subsubsection{Computing local and global $p$-adic heights of a Mordell-Weil generator}
Using {\tt Sage} \cite{sage}, we compute the local height above $7$ using Coleman integration and cotangent vectors: \[h_7(P-wP,P-wP) = 4 \cdot 7 + 4 \cdot 7^{2} + 7^{3} + 7^{4} + 6 \cdot 7^{5} + 6 \cdot 7^{6} + O(7^{7}),\]
and we can compute the contributions away from $p=7$ using~\eqref{Vg1}
and~\eqref{phi_formula} to deduce the global $7$-adic height:
\begin{align*}h(P-wP,P-wP) &= h_7(P-wP,P-wP) -2\log_7(2) - 2\log_7(3) \\
  &=6 \cdot 7 + 5 \cdot 7^{2} + 4 \cdot 7^{3} + 4 \cdot 7^{5} + 7^{6} + O(7^{7}),\end{align*}
which one can check agrees with Harvey's implementation \cite{harvey:heights} of Mazur-Stein-Tate's algorithm \cite{Maz-Ste-Tat05} for $h(2P)$.

This gives \begin{align*}\tau(P) &= \frac{1}{4}\log_7(4y(P)^2) + \frac{1}{4}h(P-wP,P-wP) \\
  &= 4 \cdot 7^{2} + 4 \cdot 7^{3} + 5 \cdot 7^{4} + 3 \cdot 7^{5} + 6 \cdot 7^{6} + O(7^{7}).\end{align*}

Since $\tau(P)$  is the local component of the $7$-adic height above $7$ (that is, by definition, we have $\tau(P) = h_7(P-\infty,P-\infty)$), we can put this together with the contributions away from $p=7$ to compute the global $7$-adic height:
\begin{align*}h(P - \infty) = h(P-\infty,P-\infty) &= \tau(P) - 2\log_7(2) - \frac{5}{2}\log_7(3)\\
  &= 5 \cdot 7 + 6 \cdot 7^{2} + 2 \cdot 7^{3} + 5 \cdot 7^{4} + 2 \cdot 7^{5} + O(7^{7}).\end{align*}
\subsubsection{Computing $\alpha_{00}$}
We have \[\alpha_{00} = \frac{h(P - \infty)}{\left(\int_{\infty}^P \omega_0\right)^2}.\] This gives 
\begin{align*}\rho(P) &= \tau(P) - \alpha_{00}\left(\int_{\infty}^P \omega_0\right)^2\\
  &= 2 \cdot 7 + 4 \cdot 7^{2} + 7^{3} + 7^{5} + 6 \cdot 7^{6} + O(7^{7}).
\end{align*}
\subsubsection{$\rho(z)$ values}
By~\eqref{rhoequation}, we can combine~\eqref{phi_formula}
and~\eqref{Vg1} to compute the set $T$ of all possible values of $\rho(z)$ for
$7$-integral points $z$ on $X$.
It turns out that we have 
\[
T = \{i\cdot\log_7(2) + j\cdot\log_7(3)\;:\;i = 0,\,2,\;
j = 0,\, 2,\, 5/2\}\;.
\]
We compute the  values of \[\rho(z) = \tau(z) -\alpha_{00}\left(\int_{\infty}^z \omega_0\right)^2\] for the sixteen integral points:
\begin{align*}&(-48, \pm 324), (-12, \pm 324), (24, \pm 108),
  (33, \pm 81),  \\
  &(40, \pm 116), (60, \pm 324), (132, \pm 1404), (384, \pm 7452),\end{align*}
and find that they all lie in the set $T$, as summarized below.

\begin{center}
  \begin{tabular}{|| c | c ||}
    \hline
    $z$ & $\rho(z)$ \\
    \hline
        $(-48,\pm324)$ & $2\log_7(2) +\frac{5}{2}\log_7(3) $ \\
    $(-12,\pm 324)$ & $2\log_7(2) + 2\log_7(3)$  \\
    $(24, \pm108)$ & $2\log_7(2) + 2\log_7(3)$  \\
    $(33,\pm 81)$ & $\frac{5}{2}\log_7(3)$\\
    $(40,\pm 116)$ &   $2\log_7(2)$\\
    $(60,\pm 324)$ &   $2\log_7(2) + \frac{5}{2}\log_7(3)$ \\
    $(132, \pm1404)$ &  $2\log_7(2) + 2\log_7(3)$ \\
    $(384,\pm 7452)$ & $2\log_7(2) + \frac{5}{2}\log_7(3)$ \\    
    \hline
  \end{tabular}
\end{center}

\begin{remark}Note a similar result is obtained in \cite{BCKW12} for a semistable rank 1 elliptic curve, up to normalization. Our $\rho(z)$ values, calculated on a minimal model, are twice the corresponding values of $||w||$.
The reason is that our normalization of the $p$-adic height corresponds to the
normalization in~\cite{harvey:heights}, which is $2p$ times the normalization
in~\cite{Maz-Ste-Tat05}.
One the other hand, the normalization used in \cite{BCKW12} is $p$ times the normalization
in~\cite{Maz-Ste-Tat05}.
One can also check directly that our  formulas for the local contributions are
twice the formulas for the local contributions in \cite{BCKW12}.
\end{remark}

\subsection{Genus 2}\label{ex:g2}
Let $X: y^2 = f(x)$, where $f(x) = x^3(x-1)^2 + 1$.
Using a~2-descent as implemented in {\tt Magma}, one checks that the Jacobian
$J$ of $X$ has Mordell-Weil rank~2 over $\Q$.
We are grateful to Michael Stoll for checking that $X$ has only the six obvious
integral points  $(2,\pm 3), (1,\pm 1), (0,\pm 1)$, using the
methods of~\cite{BMSST08}. Let $P = (2,-3), Q = (1,-1), R = (0,1) \in X(\Q)$.
Then $[(P)-(\infty)]$ and $[(Q)-(R)]$ are generators of the free part of the
Mordell-Weil group. We take as our working prime $p = 11$. 

The only prime $q$ where the Zariski closure of $X$ over $\Spec(\Z_q)$
is not already regular is $q=2$.
Using {\tt Magma}, we find that there is a desingularization of the
Zariski closure of $X$ in the strong sense such that 
\[
\XX_2 = \Gamma_0+\Gamma_1+\Gamma_2+\Gamma_3,
\]
where all components $\Gamma_i$ have genus~0.
Since the intersection matrix is
\begin{equation*}
  M=\left(\begin{array}{rrrr}
      -4&  2&  1&  1\\
      2& -2&  0&  0\\
      1&  0& -2&  1\\
      1&  0 & 1& -2\\
    \end{array}\right)\,,\end{equation*}
this implies that
$\div(\omega)=2\infty$ has no vertical component. 
Note that $\Gamma_0$ and $\Gamma_1$ have an intersection point of multiplicity~2 and
$\Gamma_0, \Gamma_2, \Gamma_3$ intersect in one point, so $\XX$ is not
semistable.

\subsubsection{Heights of Mordell-Weil generators} Let $D_1 = (P) - (\infty)$
and $D_2 =(Q) - (R)$.
Using {\tt Sage}, we compute the local $11$-adic height above $11$ as 
\begin{align*}h_{11}(D_1, D_1) &= \frac{1}{4}\log_{11}(4y(P)^2) +
\frac{1}{4}h_{11}((P) - (wP), (P) - (wP))\\
  &= 10 \cdot 11 + 2 \cdot 11^{2} + 9 \cdot 11^{3} + 9 \cdot 11^{4} + 8 \cdot 11^{5} + O(11^{7}).\end{align*}
Away from $p = 11$, $h(D_1, D_1)$ only has a contribution  at $2$ (computed
using~\eqref{phi_formula}), and we have 
\begin{align*}h(D_1,D_1) &= -\frac{2}{3}\log_{11}(2) + h_{11}(D_1, D_1) \\
  &= 7 \cdot 11 + 11^{2} + 8 \cdot 11^{3} + 8 \cdot 11^{4} + 7 \cdot 11^{5} + 2 \cdot 11^{6} + O(11^{7}).\end{align*}

Using $2(P) - (S) - (wS) \sim 2(P) - 2(\infty)$, where  $S = (-2/9, -723/9^3)$, we compute
\begin{align*}
  h_{11}(D_1, D_2) &= \frac{1}{2} \left(h_{11}((P)-(S), (Q)-(R)) -
    h_{11}((P)-(wS), (wQ)-(wR))\right)  \\ 
  &= 4 \cdot 11 + 4 \cdot 11^{2} + 8 \cdot 11^{3} + 4 \cdot 11^{4} + 9 \cdot 11^{5} + O(11^{7})
\end{align*}
and
\begin{align*}
  h(D_1, D_2) &= \frac{1}{2}\left(h((P)-(S),(Q)-(R)) - h((P)-(wS),(wQ)-(wR))\right)\\
  &=\frac{1}{2}\left(h_{11}((P)-(S), (Q)-(R)) - \left(-\frac{1}{3}\log_{11}(2) +
    h_{11}((P)-(wS), (wQ)-(wR))\right)\right)  \\
  &= 3 \cdot 11 + 10 \cdot 11^{2} + 9 \cdot 11^{3} + 6 \cdot 11^{4} + 4 \cdot 11^{5} + 4 \cdot 11^{6} + O(11^{7}).
\end{align*}
Finally, we have
\begin{align*}
  h_{11}(D_2, D_2) &= h_{11}((Q)-(R), (wR)-(wQ))\\
  &=   3 \cdot 11 + 4 \cdot 11^{2} + 4 \cdot 11^{3} + 11^{4} + 11^{5} + 3 \cdot 11^{6} + O(11^{7})
\end{align*} and
\begin{align*}
  h(D_2, D_2) &= \frac{5}{6}\log_{11}(2) + h_{11}(D_2,D_2)  \\
  &= 8 \cdot 11 + 2 \cdot 11^{4} + 6 \cdot 11^{5} + 6 \cdot 11^{6} +  O(11^{7}).
\end{align*}
Note that for $h(D_1, D_2)$ and $h(D_2, D_2)$ our chosen representatives are
disjoint, so we can use the techniques of~\cite{Bes-Bal10,BaMuSt12}.

\subsubsection{The coefficients $\alpha_{ij}$}

We compute the coefficients $\alpha_{ij}$ using the matrix of the various products of Coleman integrals evaluated at $D_1, D_2$ (as in~\cite{BBK09}) and the global $11$-adic heights computed above:

\begin{equation*}
\fontsize{8}{4}{
  \left(\begin{array}{r}
      \alpha_{00}\\
      \alpha_{01}\\
      \alpha_{11}
    \end{array}\right) = 
  \left(\begin{array}{ccc}
      \int_{D_1}\omega_0\int_{D_1}\omega_0  & \int_{D_1}\omega_0\int_{D_1}\omega_1 & \int_{D_1}\omega_1\int_{D_1}\omega_1\\
      \int_{D_1}\omega_0\int_{D_2}\omega_0  &  \frac{1}{2}\left(\int_{D_1}\omega_0\int_{D_2}\omega_1 +\int_{D_1}\omega_1\int_{D_2}\omega_0 \right) & \int_{D_1}\omega_1\int_{D_2}\omega_1\\
      \int_{D_2}\omega_0\int_{D_2}\omega_0  & \int_{D_2}\omega_0\int_{D_2}\omega_1 & \int_{D_2}\omega_1\int_{D_2}\omega_1\\
    \end{array}\right)^{-1}\left(\begin{array}{r}
      h(D_1,D_1)\\
      h(D_1,D_2)\\
      h(D_2,D_2)
    \end{array}\right).}
\end{equation*}

This gives
\begin{align*}
\alpha_{00} &=8 \cdot 11^{-1} + 10 + 11 + 10 \cdot 11^{2} + 6 \cdot 11^{3} + 3 \cdot 11^{4} + 6 \cdot 11^{5} + 4 \cdot 11^{6} + 6 \cdot 11^{7} +  O(11^{8})\\
\alpha_{01} &= 11^{-1} + 5 + 4 \cdot 11 + 10 \cdot 11^{2} + 7 \cdot 11^{3} + 5 \cdot 11^{4} + 8 \cdot 11^{5} + 8 \cdot 11^{6} + 5 \cdot 11^{7} +O(11^{8})\\
\alpha_{11} &= 5 \cdot 11^{-1} + 10 + 6 \cdot 11 + 4 \cdot 11^{2} + 8 \cdot 11^{3} + 5 \cdot 11^{4} + 9 \cdot 11^{5} + 6 \cdot 11^{6} + 11^{7} + O(11^{8}).\end{align*}
\subsubsection{Values of $\rho$ on 11-integral points}
Since $\div(\omega)$ is horizontal and the polynomial $f$ is monic, we only need~\eqref{phi_formula} to compute
the set 
\[
    T =\left\{0, \frac{1}{2}\log_{11}(2), \frac{2}{3}\log_{11}(2)\right\}
\]
of all possible values of $\rho$ on 11-integral points.

\subsubsection{Constructing the dual basis}Now we describe how to
construct power series whose zeros contain the set of integral points
of $X$. For this, we will need to use the expression for $\tau$ given
in Theorem~\ref{tauthm}. (Note that thus far, we have only used the expression for $\tau$ as a local
height, which does not give a power series expansion.)

First, we must compute the dual basis for $\{\omega_0,\omega_1\}$ in $W$. To make the
calculations compatible with previous height computations~\cite{Bes-Bal10}, we take $W$ to be 
the unit root subspace. 

Recall that a basis for $W$ is given by $\{(\phi^*)^n\omega_2,(\phi^*)^n\omega_3\}$, where $\phi$ is a lift of $p$-power Frobenius and $n$ is the working precision.  Thus we let $\tilde{\omega}_j$,
for $j=2,3$, be the projection of $\omega_j$ on the unit root subspace
along the space of holomorphic forms. We clearly have
$[\tilde{\omega}_j] \cup [\omega_i]= [\omega_j] \cup [\omega_i]$ for $i\le 1 < j$.

Here is the cup product matrix for $X$:
\[\left(\begin{array}{rrrr}
0 & 0 & 0 & \frac{1}{3} \\
0 & 0 & 1 & \frac{4}{3} \\
0 & -1 & 0 & \frac{1}{3} \\
-\frac{1}{3} & -\frac{4}{3} & -\frac{1}{3} & 0
\end{array}\right),\]
where the $ij$th entry is giving the value of $[\omega_i] \cup
[\omega_j]$. The inverse of the bottom left corner (which gives the
above cup product values $[\tilde{\omega}_j] \cup [\omega_i]$) is
\[\left(\begin{array}{rr}
 4 & -3 \\
 -1 & 0
\end{array}\right),\] and this immediately gives
\begin{align*}
\bom_0&= 4\tilde{\omega}_2  - 3\tilde{\omega}_3\\
\bom_1&=  -\tilde{\omega}_2
\end{align*}
(note that this is consistent with~\eqref{eq:bom}).

We now need to compute the projection of $\omega_2, \omega_3$ with respect to the basis $\{\omega_0,\omega_1,\tilde{\omega}_2,\tilde{\omega}_3\}$.

We do this by inverting the matrix with first two columns from the identity matrix and last two columns from the Frobenius matrix raised to the working precision. Calling the resulting  upper-right submatrix 
\[\left(\begin{array}{rr}
a & b \\
c & d
\end{array}\right),\]
we find
\begin{align*}
a &=3 + 10 \cdot 11 + 10 \cdot 11^{2} + 11^{4} + 11^{5} + 5 \cdot 11^{6} + 11^{7} + 3 \cdot 11^{8} + 4 \cdot 11^{9} + 5 \cdot 11^{10} + O(11^{11})\\
b &= 6 + 8 \cdot 11 + 4 \cdot 11^{2} + 11^{3} + 7 \cdot 11^{4} + 9 \cdot 11^{5} + 2 \cdot 11^{6} + 6 \cdot 11^{7} + 2 \cdot 11^{8} + 5 \cdot 11^{9} + 6 \cdot 11^{10}  + O(11^{11})\\
c &=  4 + 3 \cdot 11 + 6 \cdot 11^{2} + 6 \cdot 11^{3} + 9 \cdot 11^{4} + 10 \cdot 11^{5} + 4 \cdot 11^{6} + 5 \cdot 11^{7} + 2 \cdot 11^{8} + 2 \cdot 11^{9}  + O(11^{11})\\
d &=  6 + 11^{2} + 9 \cdot 11^{3} + 5 \cdot 11^{4} + 7 \cdot 11^{5} + 4 \cdot 11^{6} + 8 \cdot 11^{8} + 2 \cdot 11^{10} + O(11^{11})
\end{align*} so that we have 
\begin{align*}
\tilde{\omega_2} &= \omega_2 - a\omega_0 - c\omega_1\\
\tilde{\omega_3} &= \omega_3 - b\omega_0 - d\omega_1
\end{align*}
which gives 
\begin{align*}
\bar{\omega_0} &= (-4a+3b)\omega_0 + (-4c + 3d)\omega_1 + 4\omega_2 - 3\omega_3\\
\bar{\omega_1} &= a \omega_0 + c\omega_1 - \omega_2.
\end{align*}
One can check that  this  gives $[\bom_i] \cup [\omega_j] = \delta_{ij}$.

Now we use the dual basis to construct power series within each of the following $\F_{11}$-residue disks:
\[\{ \overline{(0, \pm 1)}, \overline{(1, \pm 1)}, \overline{(2, \pm 3)}, \overline{(8, \pm 3)}, \overline{(4, \pm 4)}, \overline{(6,0)} \}\;.\]

For example, starting with the residue disk of $\overline{(0,1)}$, we first lift $\overline{(0,1)}$ to the point $P = (0, 1)$. We compute $\tau(P)$ using its interpretation as the $p$-component of the global $p$-adic height pairing of $P - \infty$: \[\tau(P) = 3 \cdot 11 + 2 \cdot 11^{2} + 11^{3} + 4 \cdot 11^{4} + 2 \cdot 11^{5} + 9 \cdot 11^{6} + 8 \cdot 11^{7} + 6 \cdot 11^{8} + 9 \cdot 11^{9} + 7 \cdot 11^{10} + O(11^{11}).\] Using this, we write down a power series expansion for $\rho$ in the disk of $(0,1)$:
\[\rho(z) = \tau(P) -2\sum_{i=0}^{1} \left( \int_{P}^{z} \omega_i  \bom_i  + \int_{P}^{z} \omega_i  \int_{\cansec_0}^{z} \bom_i \right)  - \sum_{0 = i\leq j}^1 \alpha_{ij}\int_{\cansec_0}^z \omega_i\int_{\cansec_0}^z \omega_j.\]
Setting this equal to each of the values in the set $T = \{0,  \frac{1}{2}\log_{11}(2), \frac{2}{3}\log_{11}(2)\}$, we find that the points $z$ in the residue disk $\overline{(0,1)}$ with $x$-coordinates as indicated in the table below 
are the only $\Z_{11}$-points $z$ which achieve $\rho(z)$ values in the set $T$. 

This computation recovers the integral point $(0,1)$ and tells us that no other integral points in the disk exist with $x$-coordinate having absolute value less than $11^{11}$.   Here we summarize the computation in all of the residue disks:
\begin{center}
 \begin{tabular}{||c |  r  | c ||}
    \hline
  disk & $x(z)$ & $\rho(z)$ \\
  \hline
& $11 + 4 \cdot 11^{3} + 9 \cdot 11^{4} + 3 \cdot 11^{5} + 8 \cdot 11^{6} +O(11^7)$ & $0$   \\
& $9 \cdot 11 + 5 \cdot 11^{2} + 8 \cdot 11^{3} + 4 \cdot 11^{4} + 8 \cdot 11^{5} + 8 \cdot 11^{6} +O(11^7)$ & $0$ \\
$\overline{(0,\pm 1)}$ & $2 \cdot 11 + 3 \cdot 11^{2} + 2 \cdot 11^{3} + 7 \cdot 11^{4} + 9 \cdot 11^{5} + 9 \cdot 11^{6} + O(11^7)$ & $\frac{1}{2}\log_{11}(2)$\\
& $8 \cdot 11 + 10 \cdot 11^{2} + 10 \cdot 11^{3} + 2 \cdot 11^{4} + 11^{5} + 10 \cdot 11^{6} + O(11^7)$ & $\frac{1}{2}\log_{11}(2)$\\
& $O(11^7)$ &  $\frac{2}{3}\log_{11}(2)$ \\
& $10 \cdot 11 + 11^{2} + 6 \cdot 11^{3} + 6 \cdot 11^{4} + 8 \cdot 11^{5} + 8 \cdot 11^{6} +O(11^7)$ &  $\frac{2}{3}\log_{11}(2)$ \\

\hline  

 & $1 +O(11^{7})$&  $\frac{1}{2}\log_{11}(2)$\\ 
$\overline{(1, \pm 1)}$&  $1 + 5 \cdot 11 + 9 \cdot 11^{2} + 8 \cdot 11^{3} + 8 \cdot 11^{4} + 3 \cdot 11^{5} + 4 \cdot 11^{6} +O(11^7)$ & $\frac{1}{2}\log_{11}(2)$\\
&  $1 + 6 \cdot 11 + 10 \cdot 11^{2} + 4 \cdot 11^{4} + 10 \cdot 11^{5} + 3 \cdot 11^{6} + O(11^7)$ &  $\frac{2}{3}\log_{11}(2)$\\
& $1 + 10 \cdot 11 + 9 \cdot 11^{2} + 5 \cdot 11^{3} + 10 \cdot 11^{4} + 8 \cdot 11^{5} + O(11^7)$ & $\frac{2}{3}\log_{11}(2)$\\
\hline
& $2 + 6 \cdot 11 + 6 \cdot 11^{2} + 11^{3} + 10 \cdot 11^{4} + 3 \cdot 11^{5} + 2 \cdot 11^{6} + O(11^7)$ & $0$  \\
& $2 + 8 \cdot 11 + 8 \cdot 11^{2} + 7 \cdot 11^{3} + 6 \cdot 11^{4} + 8 \cdot 11^{5} + O(11^7)$ & $0$ \\
$\overline{(2, \pm 3)}$ & $2 + 5 \cdot 11 + 5 \cdot 11^{2} + 5 \cdot 11^{3} + 4 \cdot 11^{4} + 5 \cdot 11^{5} + 3 \cdot 11^{6} + O(11^7)$ & $\frac{1}{2}\log_{11}(2)$ \\
& $2 + 9 \cdot 11 + 4 \cdot 11^{2} + 5 \cdot 11^{3} + 9 \cdot 11^{4} + 2 \cdot 11^{6} +O(11^7)$ & $\frac{1}{2}\log_{11}(2)$ \\
& $2 + O(11^7)$ & $\frac{2}{3}\log_{11}(2)$ \\
& $2 + 3 \cdot 11 + 11^{2} + 4 \cdot 11^{3} + 11^{5} + 7 \cdot 11^{6} +O(11^7)$ &  $\frac{2}{3}\log_{11}(2)$  \\
\hline
$\overline{(8,\pm 3)}$ &  $8 + 7 \cdot 11 + 9 \cdot 11^{2} + 4 \cdot 11^{3} + 7 \cdot 11^{4} + 2 \cdot 11^{5} + 3 \cdot 11^{6} +O(11^7)$ &  $0$ \\
& $8 + 2 \cdot 11^{2} + 2 \cdot 11^{3} + 4 \cdot 11^{4} + 7 \cdot 11^{5} + 4 \cdot 11^{6} +O(11^7)$ & $0$  \\
\hline
$\overline{(4, \pm 4)}$ & $-$ & $-$ \\
\hline
$\overline{(6,0)}$ &  $6 + 9 \cdot 11 + 9 \cdot 11^{2} + 6 \cdot 11^{3} + 6 \cdot 11^{4} + 4 \cdot 11^{5} + 2 \cdot 11^{6} +  O(11^{7})$  & $\frac{1}{2}\log_{11}(2)$  \\
\hline
\end{tabular}
\end{center}

In particular, here are the recovered integral points and their corresponding $\rho$ values:
\begin{center}
 \begin{tabular}{|| c | c ||}
    \hline
  $z$ &  $\rho(z)$ \\
    \hline
$(2, \pm3)$  & $\frac{2}{3}\log_{11}(2)$ \\
$(1, \pm1)$ & $\frac{1}{2}\log_{11}(2)$\\
$(0,\pm1)$ & $\frac{2}{3}\log_{11}(2)$ \\

\hline
\end{tabular}
\end{center}
In forthcoming work we verify that these are the only
integral points by combining the methods of the present paper with a suitable variant of the
Mordell-Weil sieve.

\bibliography{total}
\bibliographystyle{amsplain}
\end{document}